\title[]{Asymptotic stability for K\"ahler-Ricci solitons}
\author[]{Ryosuke Takahashi}
\address{Graduate School of Mathematics, Nagoya University, Furo-cho, Chikusa-ku, Nagoya, 464-8602, Japan}
\email{m11036a@math.nagoya-u.ac.jp}
\keywords{Fano manifold, K\"ahler-Ricci soliton, balanced metric}
\thanks{This is the author's accepted version. The final publication is available at Springer via http://dx.doi.org/10.1007/s00209-015-1518-4.}
\subjclass[2010]{53C25}
\theoremstyle{definition}
\newtheorem{definition}{Definition}[section]
\newtheorem*{acknowledgements}{Acknowledgements}
\theoremstyle{plain}
\newtheorem{theorem}{Theorem}[section]
\newtheorem{proposition}{Proposition}[section]
\newtheorem{lemma}{Lemma}[section]
\newtheorem{corollary}{Corollary}[section]
\theoremstyle{remark}
\newtheorem{remark}{Remark}[section]
\begin{document}
\begin{abstract}
Let $X$ be a Fano manifold. We say that a hermitian metric $\phi$ on $-K_X$ with positive curvature $\omega_{\phi}$ is a K\"ahler-Ricci soliton if it satisfies the equation ${\rm Ric}(\omega_{\phi}) - \omega_{\phi} = L_{V_{KS}} \omega_{\phi}$ for some holomorphic vector field $V_{KS}$. The candidate for a vector field $V_{KS}$ is uniquely determined by the holomorphic structure of $X$ up to conjugacy, hence depends only on the holomorphic structure of $X$. We introduce a sequence $\{V_k\}$ of holomorphic vector fields which approximates $V_{KS}$ and fits to the quantized settings. Moreover, we also discuss about the existence and convergence of the quantized K\"ahler-Ricci solitons attached to the sequence $\{V_k\}$.
\end{abstract}
\maketitle
\tableofcontents
\section{Introduction} \label{sect:1}
Let $X$ be an $n$-dimensional Fano manifold and $PSH(X,-K_X)$ the set of (possibly singular) hermitian metrics $\phi$ on the anti-canonical bundle $-K_X$ with positive curvature current
\[
\omega_{\phi}:=\frac{\sqrt{-1}}{2 \pi} \partial \bar{\partial} \phi.
\]
We regard $\phi$ as a psh weight, i.e., a psh function on $K_X \backslash \{\text{0-section}\}$ satisfying the log-homogeneity property (e.g., see \cite{BB10} for more detail). Let ${\mathcal H} (X, -K_X)$ be the subset of $PSH(X,-K_X)$ consisting of all smooth psh weights. We say that a metric $\phi \in {\mathcal H}(X,-K_X)$ is a K\"ahler-Ricci soliton if it satisfies the equation
\[
{\rm Ric}(\omega_{\phi}) - \omega_{\phi} = L_{V_{KS}} \omega_{\phi}
\]
for some holomorphic vector field $V_{KS}$ (\textbf{K\"ahler-Ricci soliton vector field}), where $L_{V_{KS}}$ denotes the Lie derivative with respect to $V_{KS}$. K\"ahler-Ricci solitons arise from the geometric analysis, such as K\"ahler-Ricci flow, and have been studied extensively for recent years. For instance, Chen-Wang \cite{CW14} solved the Hamilton-Tian conjecture, which says that K\"ahler-Ricci flow on a Fano manifold always converges to a singular K\"ahler-Ricci soliton defined on a singular normal Fano variety in the sense of Cheeger-Gromov.

In this paper, we study the existence problem of K\"ahler-Ricci solitons. As shown in \cite{TZ02}, a necessary condition for the existence of a K\"ahler-Ricci soliton is the vanishing of \textbf{the modified Futaki invariant}:
let ${\rm Aut}_0(X)$ be the identity component of the automorphism group of $X$. Since ${\rm Aut}_0(X)$ is a linear algebraic group \cite{Fuj78}, we obtain a semidirect decomposition
\[
{\rm Aut}_0(X) = {\rm Aut}_r (X) \ltimes R_u,
\]
where ${\rm Aut}_r (X)$ is a reductive algebraic subgroup (uniquely determined up to conjugacy of ${\rm Aut}_0(X)$), which is the complexfication of a maximal compact subgroup $K$, and $R_u$ is the unipotent radical. We often identify a holomorphic vector field $V$ such that ${\rm Im}(V) \in {\mathfrak k}:={\rm Lie}(K)$ with its imaginary part $\xi_V:={\rm Im}(V) \in {\mathfrak k}$. Let $T_c$ be a real torus defined as the identity component of the center of $K$ and put ${\mathfrak t}_c:={\rm Lie}(T_c)$. Then Tian-Zhu (cf. \cite{TZ00}, \cite{TZ02}) showed that all K\"ahler-Ricci solitons are contained in the space of $K$-invariant smooth psh weights ${\mathcal H}(X,-K_X)^K$ and $\xi_{V_{KS}}$ is contained in ${\mathfrak t}_c$, which is uniquely determined as the minimizer of the following proper strictly convex function on ${\mathfrak k}$:
\[
{\mathcal F}(V):=\int_X e^{m_{\phi} (\xi_V)} MA(\phi),
\]
where $\phi \in {\mathcal H}(X,-K_X)^K$, $MA(\phi):=\frac{\omega_{\phi}^n}{c_1(X)^n}$ and $m_{\phi}$ is the moment map with respect to $\phi$. The function ${\mathcal F}$ is a holomorphic invariant, i.e., independent of a choice of $\phi \in {\mathcal H}(X,-K_X)^K$ and its derivative at $V$:
\[
{\rm Fut}_V (W):= - \int_X m_{\phi}(\xi_W) e^{m_{\phi} (\xi_V)} MA(\phi)
\]
is called the modified Futaki invariant. 

Thereafter, Berman-Nystr\"om \cite{BN14} generalized the modified Futaki invariant for singular normal Fano varieties with a torus action, and introduced the notion of algebro-geometric stability, called K-polystability. They also showed that any Fano manifolds admitting a K\"ahler-Ricci soliton are K-polystable. The converse is also true at least in the case of K\"ahler-Einstein metrics (cf. \cite{CDS15}, \cite{Tia15}), however, it is still open in general.

K-polystability is not only the notion of stability for the exsitence problem of K\"ahler-Ricci solitons. We study several functionals on the space of hermitian metrics and their asymptotics near the boundary. Then the notion of analytical K-polysitability is defined as the strong properness (or coercivity) of the modified Ding functional. On the other hand, we also consider its analogue in the space ${\mathcal H}_k$ of hermitian inner products on $H^0(X,-kK_X)$ and study their asymptotics as $k$ tends to infinity. Critical points of the quantized functionals are called balanced metrics. The existence of a balanced metric is closely related to the stability of the projective embedding of $X$ (cf. \cite{Don02}, \cite{Don09}). Berman-Nystr\"om \cite{BN14} showed that there exist a certain kind of balanced metrics, called \textbf{quantized K\"ahler-Ricci solitons} and this sequence of metrics converges to a K\"ahler-Ricci soliton as $k$ tends to infinity under some strong assumptions.

The main purpose of this paper is to perturb the sequence of quantized K\"ahler-Ricci solitons and show their existence and convergence under weaker assumptions. First, we construct a sequence $\{V_k\}$ of holomorphic vector fields which approximates $V_{KS}$ and fits to the quantized settings. More concretely, this sequence is given as the following:
\begin{theorem} \label{theo:1.1}
Let $X$ be a Fano manifold and $K$ be a maximal compact subgroup of ${\rm Aut}_r (X)$. Then for sufficiently large $k$, there exists a holomorphic vector field $V_k$ such that its imaginary part is contained in ${\mathfrak t}_c$ and the corresponding quantized modified Futaki invariant at level $k$ vanishes on ${\mathfrak k}^{\mathbb C}$. The vector field $V_k$ is characterized as the unique minimizer of the quantization of the function ${\mathcal F}|_{{\mathfrak t}_c}$ at level $k$ and  converges to $V_{KS}$ as $k \rightarrow \infty$ in the usual topology of the finite dimensional vector space ${\mathfrak t}_c$. 
\end{theorem}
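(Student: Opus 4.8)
The plan is to introduce the quantized functional on $\mathfrak{t}_c$ explicitly, establish that it is strictly convex and \emph{uniformly} proper, and then pass to the limit. Fix once and for all a reference weight $\phi_0 \in \mathcal{H}(X,-K_X)^K$ and the associated inner product $\mathrm{Hilb}_k(\phi_0)$ on $H^0(X,-kK_X)$. Since $K$ acts on $-kK_X$, the torus $T_c$ acts on $H^0(X,-kK_X)$ with weights $\lambda_i^{(k)} \in \mathfrak{t}_c^*$ ($i=1,\dots,d_k$, $d_k := \dim H^0(X,-kK_X)$); let $a_i^{(k)} := (\lambda_i^{(k)} - c_k)/k$ be the rescaled weights, recentred by the unique constant vector $c_k$ so that the normalized weight measures $d_k^{-1}\sum_i \delta_{a_i^{(k)}}$ converge weakly to the push-forward of $MA(\phi_0)$ under the moment map $m_{\phi_0}$. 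Then
\[
\mathcal{F}_k(V) := \frac{1}{d_k}\,\mathrm{tr}\exp\bigl(\widehat{\mu}_k(\xi_V)\bigr) = \frac{1}{d_k}\sum_{i=1}^{d_k} e^{\langle a_i^{(k)},\xi_V\rangle}, \qquad \xi_V \in \mathfrak{t}_c,
\]
is by construction the quantization of $\mathcal{F}|_{\mathfrak{t}_c}$: the asymptotic expansion of the equivariant Bergman kernel (equivalently Duistermaat--Heckman) gives $\mathcal{F}_k \to \mathcal{F}|_{\mathfrak{t}_c}$ locally uniformly on $\mathfrak{t}_c$, and the same expansion applied to $\xi_V \mapsto \langle a_i^{(k)},\xi_W\rangle e^{\langle a_i^{(k)},\xi_V\rangle}$ yields local uniform convergence of the first derivatives as well.

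Being a positive combination of exponentials of linear forms, $\mathcal{F}_k$ is convex, and it is strictly convex as soon as the weights $\lambda_i^{(k)}$ affinely span $\mathfrak{t}_c^*$; this holds for $k \gg 0$ because $-kK_X$ is then very ample and $T_c$ acts effectively on $X$, hence effectively on the image of the Kodaira embedding. For uniform properness I would use that the barycenter of the limit measure is the origin while its support is the moment polytope $m_{\phi_0}(X)$, which has nonempty interior; therefore the origin lies in the interior of that polytope, and a weak-convergence argument produces $\varepsilon>0$ with $\mathcal{F}_k(V) \ge \varepsilon\, e^{\varepsilon\|\xi_V\|}$ for all $\xi_V \in \mathfrak{t}_c$ and all $k \gg 0$. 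Consequently each $\mathcal{F}_k$ ($k\gg 0$) attains its minimum at a unique point $V_k \in \mathfrak{t}_c$, and all these $V_k$ lie in one fixed compact subset of $\mathfrak{t}_c$.

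Next I would identify the Euler--Lagrange equation. The vanishing $d\mathcal{F}_k(V_k)=0$ on $\mathfrak{t}_c$ says precisely that the quantized modified Futaki invariant $W \mapsto \mathrm{Fut}^{(k)}_{V_k}(W)$ vanishes on $\mathfrak{t}_c$, hence, by $\mathbb{C}$-linearity, on $\mathfrak{t}_c^{\mathbb C}$. To upgrade this to vanishing on all of $\mathfrak{k}^{\mathbb C}$, note that $\mathfrak{t}_c^{\mathbb C}$ is the centre of $\mathfrak{k}^{\mathbb C}$, so the operator $\widehat{\mu}_k(\xi_{V_k})$ commutes with the whole image of the infinitesimal representation $\widehat{A}_k\colon\mathfrak{k}^{\mathbb C}\to\mathrm{End}(H^0(X,-kK_X))$; since $\mathrm{Fut}^{(k)}_{V_k}(W)$ is, up to a constant, $\mathrm{tr}\bigl(\widehat{A}_k(W)\exp\widehat{\mu}_k(\xi_{V_k})\bigr)$ plus a traceless correction, cyclicity of the trace shows it is a Lie-algebra character, so it kills $[\mathfrak{k}^{\mathbb C},\mathfrak{k}^{\mathbb C}]$. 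Combined with the decomposition $\mathfrak{k}^{\mathbb C} = \mathfrak{t}_c^{\mathbb C} \oplus [\mathfrak{k}^{\mathbb C},\mathfrak{k}^{\mathbb C}]$ and the vanishing on $\mathfrak{t}_c^{\mathbb C}$, this gives $\mathrm{Fut}^{(k)}_{V_k}\equiv 0$ on $\mathfrak{k}^{\mathbb C}$.

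Finally, the convergence $V_k \to V_{KS}$: the $V_k$ ($k\gg0$) stay in a fixed compact subset of $\mathfrak{t}_c$, so it suffices to show that every subsequential limit equals $V_{KS}$. If $V_{k_j}\to V_\infty$, then the local uniform convergence $d\mathcal{F}_{k_j}\to d\mathcal{F}$ together with $d\mathcal{F}_{k_j}(V_{k_j})=0$ forces $d(\mathcal{F}|_{\mathfrak{t}_c})(V_\infty)=0$; since $\mathcal{F}$ restricted to $\mathfrak{t}_c$ is strictly convex (Tian--Zhu) with unique minimizer $V_{KS}$, we get $V_\infty = V_{KS}$, and hence the whole sequence converges. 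The main technical obstacle is to make the asymptotic expansion $\mathcal{F}_k \to \mathcal{F}|_{\mathfrak{t}_c}$ effective enough — in particular to extract the uniform coercivity of the $\mathcal{F}_k$ needed to confine the minimizers $V_k$ to a common compact set; the remaining ingredients are soft convexity arguments and elementary representation theory.
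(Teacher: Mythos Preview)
Your proposal follows essentially the same route as the paper: establish that $\mathcal{F}_k|_{\mathfrak{t}_c}$ is a strictly convex proper function via the weak convergence of spectral measures to the Duistermaat--Heckman measure and the fact that $0\in\mathrm{int}(P)$; take $V_k$ to be its unique minimizer; deduce $V_k\to V_{KS}$ from pointwise convergence $\mathcal{F}_k\to\mathcal{F}$; and extend the vanishing of $\mathrm{Fut}_{V_k,k}$ from $\mathfrak{t}_c^{\mathbb C}$ to all of $\mathfrak{k}^{\mathbb C}$ using the decomposition $\mathfrak{k}^{\mathbb C}=\mathfrak{t}_c^{\mathbb C}\oplus[\mathfrak{k}^{\mathbb C},\mathfrak{k}^{\mathbb C}]$ together with the fact that the functional kills brackets (the paper phrases this via adjoint invariance of the trace; your ``Lie-algebra character'' formulation is equivalent).

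There is one inaccuracy worth flagging. You assert that the barycenter of the Duistermaat--Heckman measure $(m_{\phi_0})_*MA(\phi_0)$ is the origin. That barycenter equals $-\mathrm{Fut}_0$ and is in general \emph{not} zero (e.g.\ on a Fano manifold with nonvanishing classical Futaki invariant). The paper's Lemma~3.1 instead proves $\int_X m_\phi(\xi_V)\,e^{\kappa_\phi}MA(\phi)=0$, i.e.\ the origin is the barycenter of the pushforward of the \emph{weighted} measure $e^{\kappa_\phi}MA(\phi)$; since this measure has full support, one still concludes $0\in\mathrm{int}(P)$. Once you replace your barycenter claim by this, your uniform properness argument (choosing weights $\lambda_i^{(k)}/k$ near $\partial P$ in the direction of $\xi_W$) is exactly the paper's, and the rest of the argument goes through.
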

Second, we introduce the quantized K\"ahler-Ricci solitons attached to this sequence and show that:
\begin{theorem} \label{theo:1.2}
Assume that $(X, V_{KS})$ is strongly analytically K-polystable (i.e., the corresponding modified Ding functional is coercive modulo ${\rm Aut}_0(X, V_{KS})$), then there exists a quantized K\"ahler-Ricci soliton attached to $V_k$ if $k$ is sufficiently large, which is unique modulo the action of ${\rm Aut}_0(X, V_{KS})$ and as $k \rightarrow \infty$, the corresponding Bergman metrics on $X$ converge weakly, modulo automorphisms, to a K\"ahler-Ricci soliton on $(X,V_{KS})$.
\end{theorem}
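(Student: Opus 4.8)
The plan is to run the variational and quantization machinery (in the spirit of Donaldson \cite{Don02} and Berman-Nystr\"om \cite{BN14}) for the \emph{weighted} functionals attached to the perturbed fields $V_k$ of Theorem~\ref{theo:1.1}. Write $G:=\mathrm{Aut}_0(X,V_{KS})$; since $\xi_{V_k},\xi_{V_{KS}}\in{\mathfrak t}_c$ are central and $V_k\to V_{KS}$, one checks that $\mathrm{Aut}_0(X,V_k)=G$ for $k$ large, so there is no ambiguity about the group we quotient by. On the finite dimensional space $\mathcal H_k$ of hermitian inner products on $H^0(X,-kK_X)$ one has the quantized modified Ding functional $\mathcal D_k$ --- built from Donaldson's determinant functional (the finite dimensional analogue of the weighted Monge--Amp\`ere energy for $V_k$) together with a logarithmic term modelled on the $L_{V_k}$-term of the modified Ding functional $\mathcal D_{V_{KS}}$ --- whose critical points are, by definition, the quantized K\"ahler--Ricci solitons attached to $V_k$. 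This functional is $K$-invariant and convex along the Bergman geodesics of $\mathcal H_k$ (images of one-parameter subgroups of $\mathrm{GL}(H^0(X,-kK_X))$), and strictly convex transversally to the $G$-orbits; hence every critical point is a global minimum and a minimizer, once known to exist, is unique modulo $G$. This settles the uniqueness claim.

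For existence I would prove that $\mathcal D_k$ is coercive modulo $G$ for all large $k$, with coercivity threshold and constant uniform in $k$. There are two ingredients. First, the asymptotic comparison $\tfrac1k\mathcal D_k(\mathrm{Hilb}_k\phi)\to\mathcal D_{V_{KS}}(\phi)$, refined into one-sided quantization inequalities relating $\tfrac1k\mathcal L_k\circ\mathrm{Hilb}_k$ and the quantized $L_{V_k}\circ\mathrm{FS}_k$ to their continuous counterparts, with error terms controlled \emph{uniformly} over the noncompact space $\mathcal H_k$; the relevant tool is the $T_c$-equivariant weighted Bergman kernel expansion. Second, the hypothesis that $\mathcal D_{V_{KS}}$ is coercive modulo $G$, i.e. $\mathcal D_{V_{KS}}(\phi)\ge\epsilon\,J_G(\phi)-C$ where $J_G(\phi):=\inf_{g\in G}J(g^*\phi)$. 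Because $\mathcal F$ is proper and strictly convex, coercivity of $\mathcal D_{V_k}$ near $V_{KS}$ follows from that of $\mathcal D_{V_{KS}}$ with uniform constants; feeding this through $\mathrm{FS}_k/\mathrm{Hilb}_k$ and absorbing the $o(k)$ errors gives $\tfrac1k\mathcal D_k\ge\epsilon'\,J_{G,k}-C'$ on $\mathcal H_k$ for $k\gg1$, with $J_{G,k}$ the quantized version of $J_G$. Since $\mathcal H_k$ is a finite dimensional symmetric space and $J_{G,k}$ a proper convex exhaustion of $\mathcal H_k/G$, coercivity plus continuity produces a minimizer $H_k$, which is the sought quantized K\"ahler--Ricci soliton attached to $V_k$.

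For the convergence, put $\phi_k:=\mathrm{FS}_k(H_k)$. Testing $\mathcal D_k$ against $\mathrm{Hilb}_k\chi$ for an arbitrary smooth weight $\chi$ gives $\limsup_k\tfrac1k\mathcal D_k(H_k)\le\mathcal D_{V_{KS}}(\chi)$, hence $\limsup_k\tfrac1k\mathcal D_k(H_k)\le\inf\mathcal D_{V_{KS}}$, while the one-sided quantization inequality gives $\liminf_k\tfrac1k\mathcal D_k(H_k)\ge\liminf_k\mathcal D_{V_{KS}}(\phi_k)$. Therefore $\mathcal D_{V_{KS}}(\phi_k)\to\inf\mathcal D_{V_{KS}}$, and the coercivity modulo $G$ forces $J_G(\sigma_k^*\phi_k)$ to stay bounded for suitable $\sigma_k\in G$. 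Compactness of sublevel sets of $J_G$ in the finite-energy space (with its weak $d_1$-topology) then extracts a subsequence of $\sigma_k^*\phi_k$ converging weakly to a finite-energy weight $\phi_\infty$; lower semicontinuity of $\mathcal D_{V_{KS}}$ together with the energy bounds forces $\phi_\infty$ to be a minimizer, hence, by the regularity theory for minimizers of coercive modified Ding functionals, a smooth K\"ahler--Ricci soliton on $(X,V_{KS})$. Finally, uniqueness of such a soliton modulo $G$ promotes the subsequential convergence to convergence of the whole sequence of Bergman metrics modulo automorphisms.

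The main obstacle is the first ingredient above: establishing the asymptotic comparison $\tfrac1k\mathcal D_k\to\mathcal D_{V_{KS}}$ with error terms uniform over the noncompact space $\mathcal H_k$ in the weighted setting. This requires a form of the Bergman-kernel and partial-Bergman estimates that is equivariant under the torus $T_c$ and stable under the perturbation $V_k\to V_{KS}$, and it is precisely this uniformity that forces the hypothesis ``$k$ sufficiently large''; the remaining ingredients --- geodesic convexity on $\mathcal H_k$, finite-energy compactness, and regularity of minimizers --- are imported from the existing theory.
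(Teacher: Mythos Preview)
Your overall architecture—establish uniform coercivity of the quantized modified Ding functional modulo $G:=\mathrm{Aut}_0(X,V_{KS})$, extract a minimizer on the finite-dimensional symmetric space, then pass to the limit via compactness of $J$-sublevel sets and lower semicontinuity—is the same as the paper's, and your convergence paragraph is essentially identical to what the paper does. The gap is in the coercivity step.

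You propose to perturb first at the continuous level (``coercivity of $\mathcal D_{V_k}$ near $V_{KS}$ follows from that of $\mathcal D_{V_{KS}}$ with uniform constants'') and then quantize. This ordering fails. The continuous modified Futaki invariant $\mathrm{Fut}_{V_k}$ does \emph{not} vanish (only the quantized one $\mathrm{Fut}_{V_k,k}$ does, by Theorem~\ref{theo:1.1}), so $\mathcal D_{V_k}$ is \emph{not} $G$-invariant. The difference $\mathcal D_{V_k}-\mathcal D_{V_{KS}}=\mathcal E_{g_{V_{KS}}}-\mathcal E_{g_{V_k}}$ is controlled by $J(\phi)$, not by $J_G(\phi)=\inf_{F\in G}J(F^*\phi)$, and these differ unboundedly along $G$-orbits; hence an inequality $\mathcal D_{V_k}\ge\epsilon' J_G-C'$ cannot be deduced from the corresponding one for $\mathcal D_{V_{KS}}$. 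Your appeal to properness and strict convexity of $\mathcal F$ is a non-sequitur here: $\mathcal F$ lives on $\mathfrak t_c$, not on the space of metrics. The same obstruction would block the naive route ``quantize first with $V_{KS}$, then perturb'', since $\mathcal D_{g_{V_{KS}}}^{(k)}$ is likewise not $G$-invariant ($\mathrm{Fut}_{V_{KS},k}\not\equiv 0$ unless all higher modified Futaki invariants vanish—precisely the hypothesis Theorem~\ref{theo:1.2} is designed to remove).

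The paper's mechanism is to use \emph{both} available invariances, one at each end. From coercivity of $\mathcal D_{g_{V_{KS}}}$ and its $G$-invariance ($\mathrm{Fut}_{V_{KS}}\equiv 0$), together with Lemma~\ref{lemm:3.4}, one obtains $\mathcal D_{g_{V_{KS}}}^{(k)}(F^*H)\ge\tfrac{\delta''}{2}J^{(k)}(F^*H)-2C$ with the automorphism $F$ already inserted. The key Lemma~\ref{lemm:3.5} then bounds the difference of the two \emph{quantized} functionals, $\bigl|\mathcal D_{g_{V_{KS}}}^{(k)}-\mathcal D_{g_{V_k}}^{(k)}\bigr|\le\epsilon_k J^{(k)}+\epsilon_k'$ with $\epsilon_k,\epsilon_k'\to 0$, exploiting that this difference is \emph{affine} along Bergman geodesics while $J^{(k)}$ has at least linear growth. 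Finally—and this is the whole point of constructing $V_k$—the vanishing $\mathrm{Fut}_{V_k,k}\equiv 0$ gives $\mathcal D_{g_{V_k}}^{(k)}(H)=\mathcal D_{g_{V_k}}^{(k)}(F^*H)$, which lets one remove $F^*$ and take the infimum over $G$. Your sketch never invokes $\mathrm{Fut}_{V_k,k}\equiv 0$ in the coercivity step, and without it the descent to the $G$-quotient cannot be carried out. (The side claim $\mathrm{Aut}_0(X,V_k)=G$ for large $k$ is neither needed nor obviously true; the paper works with $G=\mathrm{Aut}_0(X,V_{KS})$ throughout.)
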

As a corollary, we have the following:
\begin{corollary} \label{coro:1.1}
Assume that $X$ is strongly analytically K-polystable (i.e., the Ding functional is coercive modulo ${\rm Aut}_0 (X)$), then there exists a quantized K\"ahler-Einstein metric attached to $V_k \rightarrow 0$ if $k$ is sufficiently large, which is unique modulo the action of ${\rm Aut}_0 (X)$ and as $k \rightarrow \infty$, the corresponding Bergman metrics on $X$ converge weakly, modulo automorphisms, to a K\"ahler-Einstein metric on $X$.
\end{corollary}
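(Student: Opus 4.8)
The argument I have in mind is a direct specialization of Theorem~\ref{theo:1.2} to the zero vector field, so the proof amounts to matching the hypothesis of the corollary with that of Theorem~\ref{theo:1.2} for the pair $(X,0)$ and then rephrasing its conclusion in K\"ahler-Einstein terms. First I would record the routine observation that putting the vector field equal to $0$ collapses every ``modified'' object above to its classical counterpart: since $\xi_0=0$ one has $e^{m_{\phi}(\xi_0)}\equiv 1$, so $\mathcal F(0)$ is the volume, ${\rm Fut}_0$ is the ordinary Futaki invariant, the modified Ding functional is the Ding functional, and a quantized K\"ahler-Ricci soliton attached to $0$ is a balanced metric (quantized K\"ahler-Einstein metric) in the classical sense; moreover ${\rm Aut}_0(X,0)={\rm Aut}_0(X)$, since the zero vector field is centralized by all of ${\rm Aut}_0(X)$. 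Hence the parenthetical hypothesis ``the Ding functional is coercive modulo ${\rm Aut}_0(X)$'' is literally the assertion that $(X,0)$ is strongly analytically K-polystable in the sense demanded by Theorem~\ref{theo:1.2}.

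The only point that requires an argument rather than mere unwinding of definitions is that this hypothesis forces $V_{KS}=0$; this is what upgrades the conclusion $V_k\to V_{KS}$ of Theorem~\ref{theo:1.1} to the normalization $V_k\to 0$ in the statement. Coercivity of the Ding functional modulo ${\rm Aut}_0(X)$ implies in particular that the Ding functional is bounded below, hence, by the equivalence between Ding-semistability and K-semistability for Fano manifolds, that $X$ is K-semistable; a K-semistable Fano manifold has vanishing ordinary Futaki invariant. Since ${\rm Fut}_0$ is the ordinary Futaki invariant and is, up to sign, the differential of $\mathcal F$ at $0$, strict convexity of $\mathcal F$ forces its unique minimizer $V_{KS}$ to equal $0$. (Alternatively one may use that coercivity of the Ding functional modulo ${\rm Aut}_0(X)$ produces a K\"ahler-Einstein metric on $X$, which is a K\"ahler-Ricci soliton with vanishing soliton vector field, so $V_{KS}=0$ by the Tian-Zhu characterization \cite{TZ02} of $V_{KS}$.) With $V_{KS}=0$ in hand, Theorem~\ref{theo:1.1} supplies, for all sufficiently large $k$, a holomorphic vector field $V_k$ with imaginary part in ${\mathfrak t}_c$, with vanishing quantized modified Futaki invariant at level $k$ on ${\mathfrak k}^{\mathbb C}$, and with $V_k\to V_{KS}=0$.

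Finally I would invoke Theorem~\ref{theo:1.2} for $(X,V_{KS})=(X,0)$: for $k$ sufficiently large there is a quantized K\"ahler-Ricci soliton attached to $V_k$, unique modulo ${\rm Aut}_0(X,0)={\rm Aut}_0(X)$, whose associated Bergman metrics converge weakly, modulo automorphisms, to a K\"ahler-Ricci soliton on $(X,0)$, i.e., to a K\"ahler-Einstein metric on $X$. These balanced metrics are exactly what the corollary names ``quantized K\"ahler-Einstein metrics attached to $V_k\to 0$'', so this is the content of Corollary~\ref{coro:1.1}. I do not anticipate a serious obstacle here: the statement is a specialization of Theorems~\ref{theo:1.1} and~\ref{theo:1.2}, and the only place where something beyond them enters is the identification $V_{KS}=0$, which rests merely on the standard vanishing of the Futaki invariant for K-semistable Fano manifolds together with the strict convexity of $\mathcal F$.
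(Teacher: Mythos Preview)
Your argument is correct and is precisely the intended specialization: the paper states Corollary~\ref{coro:1.1} as an immediate consequence of Theorem~\ref{theo:1.2} without a separate proof, and you have correctly supplied the one step that is not purely definitional, namely that coercivity of the ordinary Ding functional forces $V_{KS}=0$ (either of your two arguments for this is fine). One minor terminological caution: the ``quantized K\"ahler-Einstein metrics attached to $V_k\to 0$'' produced here are the quantized solitons of Definition~\ref{defi:3.2} with respect to the generally \emph{nonzero} $V_k$, not the classical anticanonically balanced metrics, so the phrase ``balanced metrics (quantized K\"ahler-Einstein metric) in the classical sense'' in your opening paragraph describes only the limiting notion and should not be read as a description of the $H_k$ actually obtained---your final paragraph already gets this right.
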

The crucial point is that in our results, we need not to assume that the vanishing of all the higher order (modified) Futaki invariants, which is, in the case of $V_{KS} \equiv 0$, an obstruction to the asymptotic Chow semi-stability (cf. \cite{Fut04}). Thus we can apply our results to even asymptotically Chow unstable Fano manifolds like \cite{OSY12}.

Now we describe the content of this paper.
In the next section, we review the basic of several functionals on the space of smooth psh weights ${\mathcal H}(X, -K_X)$. The standard reference for this section is \cite{BN14}. However, in order to prove the convergence of quantized metrics in Theorem \ref{theo:1.2}, we need to extend functionals for (possibly singular) psh weights, which requires a lot of knowledge about the pluripotential theory (e.g. \cite{BBGZ13}, \cite{BE10}, \cite{BN14}). So the reader should see these references as needed.

In Section \ref{sect:3}, we introduce the quantization of the modified Futaki invariant ${\rm Fut}_{V, k}$ and show that the functional ${\rm Fut}_{V, k}$ restricted to ${\mathfrak t}_c$ is strictly proper convex, and hence has a unique minimizer $V_k$. This observation and the quantization formula (cf. Lemma \ref{lemm:3.2}) yield Theorem \ref{theo:1.1}. Then we review the basic properties of the quantized functionals on ${\mathcal H}_k$ studied in \cite{BN14} and prove Theorem \ref{theo:1.2}. The heart of the proof of Theorem \ref{theo:1.2} consists of mainly two ideas:\\
(1) While Berman-Nystr\"om considered the torus $T_{KS}$ generated by the K\"ahler-Ricci soliton vector field $V_{KS}$, we consider the identity component of the center $T_c (\supset T_{KS})$ and the space of $T_c$-invariant hermitian metrics ${\mathcal H}(X, -K_X)^{T_c}$. Actually, this setting seems to be natural since all of $\xi_{V_k}$ lie in its Lie algebra ${\mathfrak t}_c$ by Theorem \ref{theo:1.1}.\\
(2) The condition ${\rm Fut}_{V_{KS}} \equiv 0$ (resp. ${\rm Fut}_{V_k, k} \equiv 0$) leads to the ${\rm Aut}_0 (X, V_{KS})$-invariance of the functional ${\mathcal D}_{g_{V_{KS}}}$ (resp. ${\mathcal D}_{g_{V_k}} ^{(k)}$). Hence the problem can be reduced to estimate the difference ${\mathcal D}_{g_{V_{KS}}}^{(k)} - {\mathcal D}_{g_{V_k}} ^{(k)}$, which is linear along geodesics. On the other hand, the standard exhaustion function $J^{(k)}$ has at least linear growth along geodesics. Therefore the absolute of ${\mathcal D}_{g_{V_{KS}}}^{(k)} - {\mathcal D}_{g_{V_k}} ^{(k)}$ is bounded above by an affine function $\epsilon_k J^{(k)} + \epsilon_k'$ of $J^{(k)}$ with some positive numbers $\epsilon_k \rightarrow 0$ and $\epsilon_k' \rightarrow 0$. This leads to the coercivity of ${\mathcal D}_{g_{V_k}} ^{(k)}$ and therefore the existence of the quantized K\"ahler-Ricci soliton attached to $V_k$.

Finally, we mention a result for extremal K\"ahler metrics proved by Mabuchi \cite{Mab09}, which says that any polarized manifolds admitting an extremal K\"ahler metric are asymptotically Chow stable relative to an algebraic torus. It seems that such a stability for K\"ahler-Ricci solitons has never been discussed, but it is known that relative Chow stability leads to the existence of ``polybalanced metrics'' (cf. \cite{Mab11}). That is why Theorem \ref{theo:1.2} can be seen as an analogue of Mabuchi's result indirectly.

\begin{acknowledgements}
The author would like to express his gratitude to Professor Ryoichi Kobayashi for his advice on this article, and to the referee for useful suggestions that helped him to improve the original manuscript. The author is supported by Grant-in-Aid for JSPS Fellows Number 25-3077.
\end{acknowledgements}
\section{Functionals on ${\mathcal H}(X,-K_X)^T$ and analytic K-polystability} \label{sect:2}
Let $X$ and $V_{KS}$ be as in Section \ref{sect:1}. Then we see that $V_{KS}$ generates a torus action on $(X, -K_X)$:
\begin{lemma}[\cite{BN14}, Lemma 2.13] \label{lemm:2.1}
There is a real torus $T_{KS} \subset {\rm Aut}_r (X)$ which acts on $(X, -K_X)$ such that the imaginary part of $V_{KS}$ can be identified with an element $\xi_{V_{KS}} \in {\mathfrak t}_{KS}:={\rm Lie}(T_{KS})$ and ${\mathcal H}(X,-K_X)^{V_{KS}} = {\mathcal H}(X,-K_X)^{T_{KS}}$.
\end{lemma}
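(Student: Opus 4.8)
The plan is to recall the basic structure theory of holomorphic vector fields on a Fano manifold and to produce the torus $T_{KS}$ as the closure of the one-parameter subgroup generated by $\xi_{V_{KS}}$. First I would use the fact, established by Tian--Zhu (cited in the introduction as \cite{TZ00}, \cite{TZ02}), that $\xi_{V_{KS}} \in {\mathfrak t}_c = {\rm Lie}(T_c)$, where $T_c$ is the identity component of the center of a maximal compact subgroup $K$ of ${\rm Aut}_r(X)$. In particular $\xi_{V_{KS}}$ is an element of the Lie algebra of a compact torus, namely $T_c$. Since $T_c$ is compact, the closure of the one-parameter subgroup $\{\exp(t\,\xi_{V_{KS}}) : t \in {\mathbb R}\}$ inside $T_c$ is a compact abelian subgroup, hence a real subtorus; call it $T_{KS}$. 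By construction $T_{KS} \subset T_c \subset K \subset {\rm Aut}_r(X)$ acts on $X$, and since $-K_X$ carries a canonical ${\rm Aut}_r(X)$-linearization, this action lifts to $(X,-K_X)$. Moreover $\xi_{V_{KS}} \in {\mathfrak t}_{KS} := {\rm Lie}(T_{KS})$ by definition of the closure, which gives the required identification.

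Next I would establish the equality of invariant spaces ${\mathcal H}(X,-K_X)^{V_{KS}} = {\mathcal H}(X,-K_X)^{T_{KS}}$. The inclusion $\supseteq$ is immediate: a weight fixed by all of $T_{KS}$ is in particular fixed by the flow of $\xi_{V_{KS}}$, hence annihilated by $L_{V_{KS}}$ (here invariance under $V_{KS}$ should be understood as invariance under the flow of $\xi_{V_{KS}} = {\rm Im}(V_{KS})$, i.e.\ under $\exp(t\,\xi_{V_{KS}})$, which is how $V_{KS}$-invariance of weights is used throughout the paper). For the reverse inclusion, suppose $\phi$ is invariant under the closed subgroup $\overline{\{\exp(t\,\xi_{V_{KS}})\}}$; but that closed subgroup \emph{is} $T_{KS}$, so $\phi$ is $T_{KS}$-invariant. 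The only subtlety is that $\phi$ is invariant along the one-parameter subgroup by hypothesis (being $V_{KS}$-invariant); invariance then propagates to the topological closure by continuity of the $T_c$-action on weights, giving $T_{KS}$-invariance.

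The main obstacle, such as it is, is purely a matter of making the identifications precise: one must check that ``$V_{KS}$-invariant'' as used for smooth psh weights genuinely means invariance under the real flow of $\xi_{V_{KS}}$ (as opposed to some formal condition involving the full complex vector field), and that the $T_c$-action on the infinite-dimensional space ${\mathcal H}(X,-K_X)$ is continuous enough that pointwise invariance under a dense one-parameter subgroup forces invariance under its closure. Both are standard once one recalls that $V_{KS}$ is a real holomorphic vector field of the form ${\rm Re}$ of something whose imaginary part generates the torus, so I would simply cite \cite{TZ02} and \cite{BN14} for these facts rather than reprove them. Since this lemma is quoted verbatim from \cite[Lemma 2.13]{BN14}, the expected write-up is short: reduce to $\xi_{V_{KS}} \in {\mathfrak t}_c$, take $T_{KS}$ to be the closure of the generated one-parameter subgroup, and note that invariance under a one-parameter subgroup and invariance under its closure coincide.
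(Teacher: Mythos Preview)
The paper provides no proof of this lemma at all; it is stated with attribution to \cite[Lemma~2.13]{BN14} and used as a black box. Your proposal is a correct and standard reconstruction of the argument one finds in \cite{BN14}: use Tian--Zhu to place $\xi_{V_{KS}}$ in $\mathfrak{t}_c$, take $T_{KS}$ to be the closure of the one-parameter subgroup it generates inside the compact torus $T_c$, and obtain the equality of invariant spaces from continuity of the action. There is nothing to compare on the paper's side, and your sketch has no genuine gaps.
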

Let $T \subset {\rm Aut}_r (X)$ be an $m$-dimensional real torus acting on $(X, -K_X)$.
For $\phi \in {\mathcal H}(X,-K_X)^T$, we denote the moment map with respect to $\phi$ by
\[
m_{\phi} \colon X \rightarrow m_{\phi}(X)=:P \subset {\mathfrak t}^* \simeq {\mathbb R}^m,
\]
where we identify ${\mathfrak t}^* \simeq {\mathbb R}^m$ using a inner product on ${\mathfrak t}$.
The image $P$ is a compact convex polytope of dimension $m$, characterized as the support of the Duistermaat-Heckman measure
\[
\nu^T:=(m_{\phi})_{\ast} MA(\phi),
\]
which is independent of a choice of $\phi \in {\mathcal H}(X,-K_X)^T$ (cf. Proposition \ref{prop:3.1}). 

Now we recall several functionals on ${\mathcal H}(X,-K_X)^T$ which play a central role in the study of K\"ahler-Ricci solitons. Let $\phi_0 \in {\mathcal H}(X,-K_X)^T$ be a reference metric and $g$ a positive continuous function on the moment polytope $P$. We normalize $g$ so that $g \nu^T$ is a probability measure on $P$. Following \cite[Section 2.4 and 2.6]{BN14}, we define the $g$-Monge-Amp\`ere energy by the formula
\[
d {{\mathcal E}_g}|_{\phi} (\dot{\phi})= \int_X \dot{\phi} MA_g (\phi), \;\; {\mathcal E}_g (\phi_0) = 0,
\]
where $MA_g (\phi) := g(m_{\phi}) MA(\phi)$ denotes the $g$-Monge-Amp\`ere measure.
Then the functional ${\mathcal E}_g$ satisfies the scaliing property:
\[
{\mathcal E}_g (\phi + c) = {\mathcal E}_g (\phi) + c
\]
for all $\phi \in {\mathcal H}(X, -K_X)^T$ and $c \in {\mathbb R}$.
Let $\mu_{\phi}$ be a measure on $X$ given by the local expression
\[
\mu_{\phi} = e^{-\phi_U} (\sqrt{-1})^n dz_1 \wedge d\bar{z}_1 \wedge \cdots \wedge dz_n \wedge d\bar{z}_n,
\]
where $(U; z_1, \ldots, z_n)$ denotes a holomorphic local coordinates and $\phi_U:= \log \left| \frac{\partial}{\partial z_1} \wedge \cdots  \wedge \frac{\partial}{\partial z_n} \right|_{\phi}^2$.
We define functionals $J_g$ and ${\mathcal D}_g$ by
\[
J_g (\phi) = - {\mathcal E}_g(\phi) + {\mathcal L}_{\mu_0} (\phi), \;\; {\mathcal L}_{\mu_0} (\phi):= \int_X (\phi-\phi_0)d\mu_0,
\]
\[
{\mathcal D}_g (\phi) = - {\mathcal E}_g (\phi) + {\mathcal L} (\phi), \;\; {\mathcal L} (\phi):=-\log \int_X d\mu_{\phi},
\]
where $\mu_0:=MA(\phi_0)$ is a fixed probability measure on $X$. 
One can easily see that $J_g$ and ${\mathcal D}_g$ are invariant under scaling of metrics, i.e., functionals on ${\mathcal H}(X, -K_X)^T/ {\mathbb R}$. When $g \equiv 1$, we simply write these functionals by ${\mathcal E}$, $J$ and ${\mathcal D}$ respectively.
\begin{remark} \label{rema:2.1}
We can extend these functionals to the space ${\mathcal E}^1(X,-K_X)^T$ of all $T$-invariant (possibly singular) psh weights with finite energy  (cf. \cite[Section 2 and Section 3]{BN14}).
Moreover, the functional $J$ defines an exhaustion function on ${\mathcal E}^1(X,-K_X)/ {\mathbb R}$ in the sense that each level set of $J$ is compact in $L^1$-topology (cf. \cite[Lemma 3.3]{BBGZ13}).
\end{remark}
Next we set $T=T_{KS}$ and $g=g_{V_{KS}}:= \exp (\langle \xi_{V_{KS}}, \cdot \; \rangle)$. Then the corresponding functional ${\mathcal D}_g={\mathcal D}_{g_{V_{KS}}}$ is called the modified Ding functional.
By \cite[Lemma 3.4]{BN14}, we have
\[
\frac{d}{dt} {\mathcal D}_{g_{V_{KS}} }(\exp(tW)^* \phi) = {\rm Fut}_{V_{KS}} (W).
\]
Moreover, critical points of ${\mathcal D}_{g_{V_{KS}}}$ are K\"ahler-Ricci solitons with respect to $V_{KS}$.
\begin{definition}[\cite{BN14}, Section 3.6] \label{defi:2.1}
We say that a pair $(X, V_{KS})$ is strongly analytically K-polystable if the modified Ding functional ${\mathcal D}_{g_{V_{KS}}}$ is coercive modulo ${\rm Aut}_0 (X,V_{KS})$, i.e.,
\[
{\mathcal D}_{g_{V_{KS}}} (\phi) \geq \delta \underset{F \in {\rm Aut}_0 (X, V_{KS})}{\rm inf} J (F^*\phi) -C, \;\; \phi \in {\mathcal H}(X,-K_X)^{T_{KS}}
\]
for some positive constants $\delta$ and $C$, where ${\rm Aut}_0 (X, V_{KS})$ be a subgroup of ${\rm Aut}_0 (X)$ consisting of elements which commute with the action generated by $V_{KS}$.
\end{definition}
\begin{theorem}[\cite{BN14}, Theorem 3.11] \label{theo:2.1}
If a pair $(X, V_{KS})$ is strongly analytically K-polystable, then $(X, V_{KS})$ admits a K\"ahler-Ricci soliton (as a unique minimizer of the modified Ding functional up to the action of ${\rm Aut}_0 (X, V_{KS})$).
\end{theorem}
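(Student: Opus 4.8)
The plan is to run the variational (direct) method for the modified Ding functional ${\mathcal D}_{g_{V_{KS}}}$ on the finite-energy class ${\mathcal E}^1(X,-K_X)^{T_{KS}}$ (modulo scaling), following the scheme used for K\"ahler-Einstein metrics in \cite{BBGZ13} adapted to the soliton setting as in \cite{BN14}; the only new feature is the extra weight $g_{V_{KS}}=\exp(\langle\xi_{V_{KS}},\cdot\,\rangle)$, which stays between two positive constants on the compact polytope $P$. A preliminary reduction: since $\frac{d}{dt}{\mathcal D}_{g_{V_{KS}}}(\exp(tW)^*\phi)={\rm Fut}_{V_{KS}}(W)$ does not depend on $t$, while coercivity modulo ${\rm Aut}_0(X,V_{KS})$ together with $J\ge 0$ forces ${\mathcal D}_{g_{V_{KS}}}\ge -C$ everywhere, the affine function $t\mapsto{\mathcal D}_{g_{V_{KS}}}(\exp(tW)^*\phi)$ must be constant. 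Hence ${\rm Fut}_{V_{KS}}$ vanishes on ${\rm Lie}({\rm Aut}_0(X,V_{KS}))$, and consequently ${\mathcal D}_{g_{V_{KS}}}$ is invariant under ${\rm Aut}_0(X,V_{KS})$.

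Next I would take a minimizing sequence $\phi_j\in{\mathcal H}(X,-K_X)^{T_{KS}}$ with ${\mathcal D}_{g_{V_{KS}}}(\phi_j)\to\inf{\mathcal D}_{g_{V_{KS}}}$ (finite, again by coercivity), and choose $F_j\in{\rm Aut}_0(X,V_{KS})$ with $J(F_j^*\phi_j)\le\inf_F J(F^*\phi_j)+1$. Coercivity then yields a uniform bound $J(F_j^*\phi_j)\le C'$, and since $J$ is an exhaustion function whose sublevel sets are compact in $L^1$ (Remark \ref{rema:2.1}), a subsequence of $F_j^*\phi_j$ converges in $L^1$ to some $\phi_\infty\in{\mathcal E}^1(X,-K_X)^{T_{KS}}$. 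By the ${\rm Aut}_0(X,V_{KS})$-invariance, ${\mathcal D}_{g_{V_{KS}}}(F_j^*\phi_j)={\mathcal D}_{g_{V_{KS}}}(\phi_j)\to\inf{\mathcal D}_{g_{V_{KS}}}$. Because the $g$-energy ${\mathcal E}_g$ is upper semicontinuous and ${\mathcal L}(\phi)=-\log\int d\mu_\phi$ is lower semicontinuous along such $L^1$-convergence of uniformly bounded energy (standard facts from the pluripotential theory in the cited references), the sum ${\mathcal D}_{g_{V_{KS}}}=-{\mathcal E}_g+{\mathcal L}$ is lower semicontinuous, so ${\mathcal D}_{g_{V_{KS}}}(\phi_\infty)\le\liminf_j{\mathcal D}_{g_{V_{KS}}}(F_j^*\phi_j)=\inf{\mathcal D}_{g_{V_{KS}}}$. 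Thus $\phi_\infty$ minimizes ${\mathcal D}_{g_{V_{KS}}}$ over ${\mathcal E}^1(X,-K_X)^{T_{KS}}$.

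It remains to see that $\phi_\infty$ is smooth and to prove uniqueness. Computing the first variation, a critical point of ${\mathcal D}_g$ satisfies $MA_g(\phi)=\mu_\phi/\!\int d\mu_\phi$, i.e. $g_{V_{KS}}(m_\phi)\,\omega_\phi^n=c\,\mu_\phi$ for a constant $c>0$, which is equivalent to ${\rm Ric}(\omega_\phi)-\omega_\phi=L_{V_{KS}}\omega_\phi$; that the finite-energy minimizer $\phi_\infty$ satisfies this equation in the pluripotential sense follows by testing minimality against perturbations, after which a Ko\l odziej-type $L^\infty$-estimate (the density of $\mu_{\phi_\infty}$ with respect to $MA(\phi_\infty)$ being controlled since $\phi_\infty\in{\mathcal E}^1$) together with elliptic bootstrapping promotes $\phi_\infty$ to a smooth psh weight, hence a K\"ahler-Ricci soliton for $V_{KS}$. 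For uniqueness modulo ${\rm Aut}_0(X,V_{KS})$, join two minimizers by the weak ($C^{1,1}$) geodesic in ${\mathcal E}^1(X,-K_X)^{T_{KS}}$; along it ${\mathcal E}_g$ is affine and ${\mathcal L}$ is convex by Berndtsson-type convexity, so ${\mathcal D}_{g_{V_{KS}}}$ is convex, and being equal at the two minima it is affine along the geodesic. The equality case of Berndtsson's theorem forces the geodesic to be generated by a holomorphic vector field, whose $T_{KS}$-invariance (inherited from the endpoints) places it in ${\rm Lie}({\rm Aut}_0(X,V_{KS}))$; hence the two minimizers differ by an element of ${\rm Aut}_0(X,V_{KS})$.

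The main obstacle is the analytic content of the last two steps: proving the semicontinuity of ${\mathcal D}_{g_{V_{KS}}}$ on the singular finite-energy class and the regularity of the finite-energy minimizer. The weight $g_{V_{KS}}$ and the moment map $m_\phi$, which itself depends on $\phi$, make both the semicontinuity estimates for ${\mathcal E}_g$ and the a priori estimates for the associated complex Monge-Amp\`ere equation more delicate than in the K\"ahler-Einstein case; however, since $g_{V_{KS}}$ stays between two positive constants on the compact polytope $P$, these difficulties remain within reach of the pluripotential theory of \cite{BBGZ13} and \cite{BN14}.
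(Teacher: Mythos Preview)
The paper does not prove Theorem~\ref{theo:2.1} at all: it is simply quoted from \cite[Theorem~3.11]{BN14} as background, with no argument given here. So there is no ``paper's own proof'' to compare against; what you have written is a sketch of the proof in \cite{BN14} itself.

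At the level of a plan your sketch is faithful to the Berman--Nystr\"om argument: the variational scheme (coercivity $\Rightarrow$ bounded minimizing sequence after pulling back by automorphisms $\Rightarrow$ compactness of $J$-sublevel sets $\Rightarrow$ lower semicontinuity of ${\mathcal D}_{g_{V_{KS}}}$ $\Rightarrow$ existence of a finite-energy minimizer) is exactly what they do, and your identification of the two genuinely hard analytic points---semicontinuity of the $g$-weighted energy on ${\mathcal E}^1$ and regularity of the finite-energy minimizer---is accurate. One small caution: the parenthetical justification you give for the $L^\infty$-estimate (``the density of $\mu_{\phi_\infty}$ with respect to $MA(\phi_\infty)$ being controlled since $\phi_\infty\in{\mathcal E}^1$'') is not how the regularity actually goes; membership in ${\mathcal E}^1$ alone does not give the $L^p$ density needed for Ko\l odziej. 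In \cite{BN14} regularity is obtained by first showing the minimizer solves $MA_g(\phi)=\mu_\phi/\int\mu_\phi$ in the pluripotential sense and then invoking their Theorem~1.3, whose proof uses that the right-hand side $e^{-\phi}$ has good integrability (Skoda-type bounds) rather than any a priori control coming from ${\mathcal E}^1$. Since you flag this step as the main obstacle anyway, this is a refinement rather than a gap in your plan.
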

\section{The quantized setting} \label{sect:3}
\subsection{The quantization of ${\mathcal F}$-functional, the modified Futaki invariant and the Duistermaat-Heckman measure} \label{sect:3.1}
Let $X$ and $T$ be as in Section \ref{sect:2}.
\begin{lemma} \label{lemm:3.1}
Assume that $m:={\rm dim}T$ is greater than $1$ . Then the polytope $P$ contains the origin in its interior ${\rm int}(P)$.
\end{lemma}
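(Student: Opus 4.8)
The plan is to recast the statement in terms of support functions and then exploit the properness of the functional $\mathcal{F}$. A compact convex body $P\subset\mathfrak{t}^{*}$ contains the origin in its interior if and only if $\max_{p\in P}\langle p,\xi\rangle>0$ for every $\xi\in\mathfrak{t}\setminus\{0\}$; equivalently, $0\notin{\rm int}(P)$ precisely when there is some $\xi\neq 0$ with $\langle p,\xi\rangle\le 0$ for all $p\in P$ (take a separating hyperplane if $0\notin P$, a supporting one if $0\in\partial P$). Since $P=m_{\phi}(X)$ for any $\phi\in{\mathcal H}(X,-K_{X})^{T}$, this last condition says exactly that the Hamiltonian $m_{\phi}(\xi)$ is $\le 0$ everywhere on $X$. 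So the lemma reduces to ruling out the existence of such a $\xi$.

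So I would argue by contradiction: suppose $m_{\phi}(\xi)\le 0$ on $X$ for some $\xi\in\mathfrak{t}\setminus\{0\}$. Conjugating $T$ into the maximal compact subgroup $K$ (harmless, since $\mathcal{F}$ and $P$ are simply transported and the property ``$0\in{\rm int}(P)$'' is preserved), we regard $\xi\in\mathfrak{k}$ and examine $\mathcal{F}$ along the ray $t\mapsto t\xi$, $t\ge 0$. By linearity of the moment map in the Lie algebra variable, $\mathcal{F}(t\xi)=\int_{X}e^{t\,m_{\phi}(\xi)}\,MA(\phi)\le\int_{X}MA(\phi)=1$, because $t\,m_{\phi}(\xi)\le 0$ and $MA(\phi)$ is a probability measure. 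Thus the whole ray lies in the sublevel set $\{\mathcal{F}\le 1\}$, contradicting the fact (Tian--Zhu) that $\mathcal{F}$ is a proper function on $\mathfrak{k}$. Hence no such $\xi$ exists and $0\in{\rm int}(P)$.

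I would also record a variant closer to the viewpoint of Section~\ref{sect:3}: $\mathcal{F}|_{\mathfrak{t}}$ is again proper and strictly convex, hence has a unique minimizer $\xi_{0}\in\mathfrak{t}$, and vanishing of its differential gives, via the formula for ${\rm Fut}$, that $\int_{X}m_{\phi}(\eta)\,e^{m_{\phi}(\xi_{0})}MA(\phi)=0$ for all $\eta\in\mathfrak{t}$, i.e. the pushforward of $e^{m_{\phi}(\xi_{0})}MA(\phi)$ has barycenter at the origin; being mutually absolutely continuous with $\nu^{T}$, this measure has support $P$ and positive density on ${\rm int}(P)$, and the barycenter of such a measure must lie in ${\rm int}(P)$ (a supporting hyperplane at a boundary barycenter would force the density to vanish on a full-measure set). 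The main point requiring care here is not the logic but the bookkeeping: one must check that $\mathcal{F}$, a priori set up for $K$-invariant weights, is computed the same way on a merely $T$-invariant $\phi$ and in a direction $\xi\in\mathfrak{t}$ (the usual $\phi$-independence argument uses only ${\rm Im}(\xi)$-invariance of $\phi$), and, for the variant, the strict positivity of the Duistermaat--Heckman density on ${\rm int}(P)$. I do not see where the hypothesis $\dim T\ge 2$ is actually used — both arguments go through verbatim for $m=1$ — so presumably it is stated only because that is the range needed in the sequel (e.g. for $T=T_{c}$).
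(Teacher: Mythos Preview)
Your argument is correct but follows a different route from the paper. The paper does not invoke the Tian--Zhu properness of $\mathcal{F}$ at all: instead it uses the Fano condition directly. From the canonical lift of the $T$-action to $-K_X$ one has the pointwise identity $-\Delta_{\partial}m_{\phi}(\xi)+m_{\phi}(\xi)+V(\kappa_{\phi})=0$, where $\kappa_{\phi}$ is the Ricci potential; integrating against $e^{\kappa_{\phi}}MA(\phi)$ and integrating by parts gives the barycenter identity
\[
\int_{X}m_{\phi}(\xi)\,e^{\kappa_{\phi}}MA(\phi)=0\qquad\text{for all }\xi\in\mathfrak{t}.
\]
Since $m\ge 1$ forces $m_{\phi}(\xi)$ to be nonconstant, this identity shows that $m_{\phi}(\xi)$ must take a strictly positive value somewhere for every $\xi\neq 0$, which---by the same support-function/separating-hyperplane reduction you wrote down---rules out $0\notin{\rm int}(P)$.

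So the logical skeleton is the same as your ``variant'' (a barycenter of a full-support probability measure on $P$ sits at the origin), but the paper's weight is $e^{\kappa_{\phi}}$ rather than $e^{m_{\phi}(\xi_{0})}$ for the minimizer $\xi_{0}$ of $\mathcal{F}$. The advantage of the paper's choice is that it is completely self-contained: the identity drops out of a one-line integration by parts and requires neither the existence of a minimizer of $\mathcal{F}$ nor the properness result you import from Tian--Zhu, nor any facts about the Duistermaat--Heckman density. Your main argument, by contrast, is slicker to state but leans on the properness of $\mathcal{F}$ on $\mathfrak{k}$ as a black box (and the small bookkeeping of conjugating $T$ into $K$ and replacing $\phi$ by a $K$-invariant weight, which is fine since $P$ is $\phi$-independent). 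Your observation about the hypothesis is well taken: the paper's own proof only uses $m\ge 1$ (indeed it writes ``Since $m\ge 1$, $m_{\phi}$ is not a constant''), so ``greater than $1$'' appears to be a slip for ``at least $1$''.
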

\begin{proof}
Since the lift to $-K_X$ is canonical, we have an equation
\[
-\Delta_{\partial} m_{\phi} (\xi_V) + m_{\phi} (\xi_V) + V(\kappa_{\phi}) = 0
\]
for all $\xi_V \in {\mathfrak t}$, where $\kappa_{\phi}$ is the function defined by 
\[
{\rm Ric}(\omega_{\phi}) - \omega_{\phi} = \frac{\sqrt{-1}}{2 \pi} \partial \bar{\partial} \kappa_{\phi}.
\]
Integrating by parts, we find that
\begin{equation}
\int_X m_{\phi} (\xi_V) e^{\kappa_{\phi}} MA(\phi) = 0. \label{eq:3.1}
\end{equation}
Since $m \geq 1$, $m_{\phi}$ is not a constant. Thus the equation \eqref{eq:3.1} implies that  for any $\xi_V$, an inequality $m_{\phi}(\xi_V)>0$ holds on some nonempty open subset of $X$. Now we assume that $0 \not\in {\rm int}(P)$, then we can choose an element $\xi \in {\mathbb R}^m$ so that $H_{\xi} \cap {\rm int}(P) = \emptyset$, where $H_{\xi}$ a hyperplane which is orthogonal to $\xi$ and contains the origin. Then either of $m_{\phi} (\pm \xi_V)$ is semi-negative on $X$. This is a contradiction. 
\end{proof}
We define the functions ${\mathcal F}_k$ and ${\rm Fut}_{V,k}$ as
\begin{eqnarray}
{\mathcal F}_k (W):= k {\rm Trace} (e^{W/k})|_{H^0(X,-kK_X)}, \\
{\rm Fut}_{V,k} (W) := - \left. \frac{d}{dt} \right|_{t=0} {\mathcal F}_k (V+tW).
\end{eqnarray}
We set
\[
N_k:= {\rm dim} H^0(X,-kK_X),
\]
then these functions give the quantization of ${\mathcal F}$ and ${\rm Fut}_V$:
\begin{lemma}[\cite{BN14}, Proposition 4.7 or \cite{Tak14}, Proposition 2.8] \label{lemm:3.2}
Let $V$ be a holomorphic vector field generating a torus action and $W$ a holomorphic vector field generating a ${\mathbb C}^*$-action and commuting with $V$. Then we have identities
\[
{\mathcal F}(V) = \lim_{k \rightarrow \infty} \frac{{\mathcal F}_k (V)}{k N_k}, \label{eq:3.2}
\]
\[
{\rm Fut}_V (W) = \lim_{k \rightarrow \infty} \frac{1}{kN_k} {\rm Fut}_{V,k}(W). \label{eq:3.3}
\]
\end{lemma}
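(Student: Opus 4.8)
The plan is to rewrite ${\mathcal F}_k(V)$ as an exponential sum over the weights of the torus action on $H^0(X,-kK_X)$ and then to feed in the asymptotic equidistribution of these weights towards the Duistermaat--Heckman measure.

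First I would fix a $T$-invariant weight $\phi\in\mathcal H(X,-K_X)^T$, where $T\subset{\rm Aut}_r(X)$ is the compact torus generated by (the imaginary part of) $V$; since the lift of $T$ to $-K_X$ is canonical, $H^0(X,-kK_X)$ becomes a unitary $T$-representation and splits into weight spaces $H^0(X,-kK_X)=\bigoplus_{\alpha\in A_k}H^{(k)}_\alpha$, indexed by a finite set $A_k\subset\mathfrak t^*$, with $m^{(k)}_\alpha:=\dim_{\mathbb C}H^{(k)}_\alpha$ and $\sum_\alpha m^{(k)}_\alpha=N_k$. With the sign convention built into ${\mathcal F}_k$, the operator $e^{V/k}$ acts on $H^{(k)}_\alpha$ by the scalar $e^{\langle\alpha,\xi_V\rangle/k}$, whence
$$\frac{{\mathcal F}_k(V)}{kN_k}=\frac{1}{N_k}\sum_{\alpha\in A_k}m^{(k)}_\alpha\,e^{\langle\alpha/k,\,\xi_V\rangle}=\int_{\mathfrak t^*}e^{\langle x,\,\xi_V\rangle}\,d\mu_k(x),\qquad\mu_k:=\frac{1}{N_k}\sum_{\alpha\in A_k}m^{(k)}_\alpha\,\delta_{\alpha/k}.$$
The key input is then the weak convergence $\mu_k\rightharpoonup\nu^T$ of probability measures, $\nu^T=(m_\phi)_\ast MA(\phi)$ being the Duistermaat--Heckman measure, together with the uniform fact that all $\mu_k$ are supported in a fixed compact set (the normalized weight sets $\tfrac1k A_k$ stay inside the moment polytope $P$). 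I would obtain both from the equivariant Bergman-kernel asymptotics: the weight-$\alpha$ component of the Bergman kernel of $k\phi$ concentrates, as $k\to\infty$, on the level set $\{m_\phi=\alpha/k\}$, with total mass asymptotic to $k^n$ times the $\nu^T$-density at $\alpha/k$ --- this is ``quantization commutes with reduction'' in asymptotic form, which is the content behind \cite[Prop.~4.7]{BN14} and \cite[Prop.~2.8]{Tak14} and also yields the independence of $\nu^T$ recorded in Proposition \ref{prop:3.1}. Granting it, the continuous weight $x\mapsto e^{\langle x,\xi_V\rangle}$ may be integrated against the weak limit, and
$$\frac{{\mathcal F}_k(V)}{kN_k}\longrightarrow\int_{P}e^{\langle x,\xi_V\rangle}\,d\nu^T(x)=\int_X e^{m_\phi(\xi_V)}\,MA(\phi)={\mathcal F}(V),$$
which is the first identity.

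For the second identity I would use a soft argument. The rescaled functions $G_k:=\frac{1}{kN_k}{\mathcal F}_k=\frac{1}{N_k}{\rm Trace}(e^{(\,\cdot\,)/k})$, viewed on the (real) Lie algebra of the compact torus generated by $V$ and the circle inside the ${\mathbb C}^*$ generated by $W$ (these commute, so this is a torus, which we may take inside $K$), are convex --- each is a finite sum of exponentials of linear functionals --- and, by the first part applied along this torus, converge pointwise to the smooth strictly convex function $\mathcal F$. By the classical fact that gradients of a pointwise-convergent sequence of convex functions converge at every point where the limit function is differentiable, $\nabla G_k\to\nabla\mathcal F$, and pairing with $W$ gives
$$\frac{1}{kN_k}{\rm Fut}_{V,k}(W)=-\left.\frac{d}{dt}\right|_{t=0}G_k(V+tW)\longrightarrow-\left.\frac{d}{dt}\right|_{t=0}{\mathcal F}(V+tW)={\rm Fut}_V(W).$$
(The direct alternative is to decompose $H^0(X,-kK_X)$ into joint weight spaces of this larger torus, write ${\mathcal F}_k(V+tW)=k\sum_j e^{(\lambda_j+t\mu_j)/k}$ with $\lambda_j,\mu_j$ the $V$- and $W$-weights, differentiate to get ${\rm Fut}_{V,k}(W)=-\sum_j\mu_j e^{\lambda_j/k}$, and apply the equidistribution of the joint weight measure exactly as in the first part.)

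The main obstacle is the equidistribution $\mu_k\rightharpoonup\nu^T$, equivalently the equivariant Bergman-kernel (or localized Riemann--Roch) asymptotics: this is the only genuinely analytic/geometric ingredient, everything else being bookkeeping with weights plus one elementary fact about convex functions. A secondary but essential point is the uniform control of the supports of the $\mu_k$, needed in order to integrate the unbounded weight $e^{\langle\cdot,\xi_V\rangle}$ against the weak limit; this is immediate once one knows $\tfrac1k A_k\subset P$.
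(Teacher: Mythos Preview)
The paper itself does not give a proof of Lemma~\ref{lemm:3.2}; it is quoted verbatim from \cite[Proposition~4.7]{BN14} (or \cite[Proposition~2.8]{Tak14}). Your argument is correct and is essentially the one underlying those references: rewrite ${\mathcal F}_k(V)/(kN_k)$ as the integral of the continuous function $x\mapsto e^{\langle x,\xi_V\rangle}$ against the spectral measure $\nu_k$, invoke the weak convergence $\nu_k\to\nu^T$ (stated in the present paper as Proposition~\ref{prop:3.1}), and use that all $\nu_k$ are supported in the fixed compact polytope $P$ so that weak convergence suffices even though the integrand is unbounded on $\mathfrak t^*$. One cosmetic remark: you do not need to rederive the equidistribution from equivariant Bergman asymptotics --- in the logic of the present paper you may simply cite Proposition~\ref{prop:3.1}.

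The only place where you take a slightly different route than the cited references is the second identity. Your ``soft'' argument --- the $G_k$ are convex, converge pointwise to the smooth $\mathcal F$, hence their gradients converge --- is perfectly valid (this is a standard fact from convex analysis) and is arguably cleaner than writing out the joint weight decomposition and passing to the limit again, which is what the direct alternative you sketch (and the references) do. Either way the conclusion is the same; your convexity shortcut just avoids repeating the bookkeeping.
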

If we apply the equivariant Riemann-Roch formula to ${\rm Fut}_{V, k} (W)$, we have an expansion
\[
{\rm Fut}_{V,k} (W)={\rm Fut}_V ^{(0)} (W) k^{n+1} + {\rm Fut}_V ^{(1)} (W) k^n + \cdots,
\]
where ${\rm Fut}_V ^{(i)} (W)$ is the $i$-th order modified Futaki invariant introduced in \cite[Section 4.4]{BN14}.
\begin{lemma} \label{lemm:3.3}
The function ${\mathcal F}_k |_{{\mathfrak t}_c}$ is a proper strictly convex function if $k$ is sufficiently large.
\end{lemma}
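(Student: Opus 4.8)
The plan is to make ${\mathcal F}_k|_{{\mathfrak t}_c}$ completely explicit: it will be $k$ times a finite sum of exponentials of linear forms, the linear forms being (after identifying ${\mathfrak t}_c\simeq{\mathfrak t}_c^*$) the weights of the $T_c$-action on $H^0(X,-kK_X)$. Once this is in hand, both properness (every sublevel set compact, equivalently ${\mathcal F}_k(W)\to\infty$ as $|W|\to\infty$) and strict convexity will reduce to the single assertion that the origin lies in the interior of the convex hull of these weights, and that assertion I would extract, for $k\gg 0$, from Lemma \ref{lemm:3.1} together with the convergence of the rescaled weight polytopes to the moment polytope of $T_c$.

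First I would linearize the $T_c$-action on $H^0(X,-kK_X)$ by the canonical lift and decompose $H^0(X,-kK_X)=\bigoplus_\lambda E_\lambda$ into weight spaces, listing the weights with multiplicity as $\lambda_1,\dots,\lambda_{N_k}\in{\mathfrak t}_c^*$. For $W\in{\mathfrak t}_c$ the operator $e^{W/k}$ is diagonal in this decomposition with eigenvalues $e^{\langle\lambda_i,W\rangle/k}$, so
\[
{\mathcal F}_k(W)=k\sum_{i=1}^{N_k}e^{\langle\lambda_i,W\rangle/k},\qquad W\in{\mathfrak t}_c .
\]
This is smooth and convex, and its Hessian at $W$ in a direction $U$ equals $\tfrac1k\sum_i\langle\lambda_i,U\rangle^2\,e^{\langle\lambda_i,W\rangle/k}$, which is positive definite exactly when $\{\lambda_1,\dots,\lambda_{N_k}\}$ spans ${\mathfrak t}_c^*$. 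Moreover a sum of exponentials of linear forms is coercive if and only if those forms positively span the dual space, i.e.\ if and only if $0\in{\rm int}\,{\rm conv}\{\lambda_1,\dots,\lambda_{N_k}\}$: in that case $\max_i\langle\lambda_i,U\rangle$ is bounded below by some $c>0$ over the unit sphere, so ${\mathcal F}_k(W)\ge k\exp(c|W|/k)$. Since $0\in{\rm int}\,{\rm conv}\{\lambda_i\}$ already forces $\{\lambda_i\}$ to span ${\mathfrak t}_c^*$, the whole lemma reduces to proving
\[
0\in{\rm int}\,{\rm conv}\{\lambda_1,\dots,\lambda_{N_k}\}\qquad\text{for all sufficiently large }k .
\]

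For this I would use two inputs. Let $P$ denote the moment polytope of the $T_c$-action, i.e.\ the support of $\nu^{T_c}$. First, by Lemma \ref{lemm:3.1} (applied to $T_c$), $0\in{\rm int}(P)$ — if ${\rm dim}\,T_c=0$ there is nothing to prove, and if ${\rm dim}\,T_c=1$ this is anyway immediate from \eqref{eq:3.1}, since $m_\phi$ is then nonconstant. Second, it is standard — and follows from the equivariant Riemann--Roch asymptotics underlying Lemma \ref{lemm:3.2}, equivalently from the weak convergence of $\tfrac1{N_k}\sum_i\delta_{\lambda_i/k}$ to $\nu^{T_c}$ — that the rescaled weight polytopes $P_k:=\tfrac1k{\rm conv}\{\lambda_1,\dots,\lambda_{N_k}\}$ converge to $P$ in the Hausdorff distance. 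Fixing $\delta>0$ with $\overline{B_\delta(0)}\subset{\rm int}(P)$, a short convexity argument then gives $\overline{B_{\delta/2}(0)}\subset P_k$ once $k$ is large: if some $c\in\overline{B_{\delta/2}(0)}$ were not in the closed convex set $P_k$, a separating hyperplane would give a unit vector $u$ with $\sup_{P_k}\langle u,\cdot\rangle<\langle u,c\rangle$; but $c+\tfrac{\delta}{2}u\in\overline{B_\delta(0)}\subset P$ lies within $\delta/4$ of $P_k$ once $k$ is large, producing a point of $P_k$ with $\langle u,\cdot\rangle>\langle u,c\rangle+\delta/4$ — a contradiction. Hence $0\in{\rm int}(P_k)$, i.e.\ $0\in{\rm int}\,{\rm conv}\{\lambda_i\}$, for all large $k$, and the lemma follows.

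The step I expect to be the main obstacle is the geometric input $P_k\to P$: one must know not merely that the rescaled weights accumulate near points of $P$, but that their convex hulls exhaust $P$ in the limit, so that the origin — which a priori is only known to lie in the interior of the \emph{limit} polytope — ends up interior to $P_k$ for finite $k$. This is precisely where Lemma \ref{lemm:3.1} (to place $0$ strictly inside $P$, not merely in $P$) and the quantization asymptotics of Lemma \ref{lemm:3.2} (to locate the weights) both enter; granted $P_k\to P$, the rest is routine convex analysis. (Incidentally, properness alone could also be read off more softly from the pointwise convergence ${\mathcal F}_k/(kN_k)\to{\mathcal F}$ of Lemma \ref{lemm:3.2} and the properness of ${\mathcal F}$ recalled in Section \ref{sect:1}, using that convex functions converging locally uniformly to a coercive convex function are eventually coercive; but that argument gives no information on strict convexity.)
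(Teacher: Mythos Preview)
Your proof is correct and follows essentially the same approach as the paper's: both express ${\mathcal F}_k$ as $k\sum_i e^{\langle\lambda_i^{(k)},W\rangle/k}$ and then combine Lemma~\ref{lemm:3.1} ($0\in{\rm int}(P)$) with Proposition~\ref{prop:3.1} (weak convergence $\nu_k\to\nu^{T}$) to locate the rescaled weights near $P$. Your packaging is slightly cleaner---you reduce both strict convexity and properness to the single geometric condition $0\in{\rm int}\,{\rm conv}\{\lambda_i^{(k)}\}$, whereas the paper treats the two properties separately (extracting, for properness, a single weight in the outer band $P\setminus P_\epsilon$ making small angle with a given direction)---but the substance is the same.
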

\begin{proof}
We use the following proposition:
\begin{proposition}[\cite{BN14}, Proposition 4.1] \label{prop:3.1}
Let $P_k:=\{ \lambda_i^{(k)} \} \subset {\mathbb Z}^m$ be the set of all weights for the action of the complexified torus $T^{\mathbb C}$ on $H^0(X,-kK_X)$, i.e., there is a decomposition
\[
H^0(X,-kK_X) = \bigoplus_{\lambda_i^{(k)} \in P_k} E_{\lambda_i^{(k)}}.
\]
Then the spectral measure:
\[
\nu_k:=\frac{1}{N_k} \sum_{i=1}^{N_k} \delta_{\lambda_i^{(k)}/k}
\]
supported on $P_k / k$ converges to the Duistermaat-Heckman measure $\nu^T$ weakly as $k \rightarrow \infty$, where $\delta_{\lambda_i^{(k)}/k}$ denotes the Dirac measure at $\lambda_i^{(k)}/k$. In particular, $\nu^T$ does not depend on a choice of $\phi \in {\mathcal H}(X, -K_X)^T$.
\end{proposition}
For any $\xi_V, \xi_W (\neq 0) \in {\mathfrak t}_c$, the functional ${\mathcal F}_k$ along the line $\xi_V + t \xi_W$ ($t \in {\mathbb R}$) can be written as the form
\[
{\mathcal F}_k (V+ tW)= k \sum_{i=1}^{N_k} \exp(v_i^{(k)}/k + t w_i^{(k)}/k),
\]
where $v_i^{(k)}:=\langle \xi_V, \lambda_i^{(k)} \rangle$ and $v_i^{(k)}:=\langle \xi_W, \lambda_i^{(k)} \rangle$ are joint eigenvalues of the commuting action generated by ${\rm Re}(V)$ and ${\rm Re}(W)$. Then Proposition \ref{prop:3.1} implies that the functional ${\mathcal F}_k (V+ tW)$ of $t$ is strictly convex for any $\xi_V, \xi_W \in {\mathfrak t}_c$ if $k$ is sufficiently large and hence ${\mathcal F}_k$ is strictly convex.

In order to prove the properness, let $\{ \xi_{W_j} \} \subset {\mathfrak t}_c \simeq {\mathbb R}^m$ be any sequence such that $|\xi_{W_j}| \rightarrow \infty$ as $j \rightarrow \infty$. 
For $\epsilon>0$, let $P_{\epsilon}$ be the interior compact convex polytope with faces parallel to those of $P$ separated by distance $\epsilon$. By Lemma \ref{lemm:3.1}, we can choose $\epsilon>0$ so that ${\rm int}(P_{\epsilon})$ contains the origin. Then Proposition \ref{prop:3.1} implies that there exists $k_0$ such that for all $k \geq k_0$ and $\xi_W \in {\mathbb R}^m$, there exists an eigenvalue $\lambda_i^{(k)}$ satisfying
\[
\lambda_i^{(k)}/k \in P-P_{\epsilon},
\]
\[
\cos ({\rm angle}(\xi_W, \lambda_i^{(k)})) \geq 1- \epsilon.
\]
For each $\xi_{W_j}$, we choose the eigenvalue $\lambda_{j,i(j)}^{(k)}$ satisfying the above condition. Then we obtain
\[
w_{j,i(j)}^{(k)} := \langle \lambda_{j,i(j)}^{(k)}, \xi_{W_j} \rangle \geq k|\xi_{W_j}| \cdot {\rm inf}_{\xi \in \partial P_{\epsilon}} |\xi| \cdot (1- \epsilon) \rightarrow \infty
\]
as  $j \rightarrow \infty$. Hence we have
\[
{\mathcal F}_k (W_j) = k \sum_{i=1}^{N_k} \exp (w_{j,i}^{(k)}/k) \geq k \exp (w_{j,i(j)}^{(k)}/k) \rightarrow \infty
\]
as $j \rightarrow \infty$.
This completes the proof of Lemma \ref{lemm:3.3}. 
\end{proof}
Let $V_k$ be the unique minimizer of ${\mathcal F}_k |_{{\mathfrak t}_c}$.
\begin{proof}[The proof of Theorem \ref{theo:1.1}]
By Lemmas \ref{lemm:3.2} and \ref{lemm:3.3}, we find that 
the unique minimizer $V_k$ converges to the unique minimizer of ${\mathcal F}$, i.e., the K\"ahler-Ricci soliton vector field as $k \rightarrow \infty$.
Since $V_k \in {\mathfrak t}_c^{\mathbb C}$ and ${\mathcal F}_k$ is adjoint invariant (so is {\rm Trace} in the defining equation \eqref{eq:3.2} of ${\mathcal F}_k$), we have
\[
{\rm Fut}_{V_k, k} ({\rm Ad}_{F} W)= - \left. \frac{d}{dt} \right|_{t=0} {\mathcal F}_k (V_k + t {\rm Ad}_F W) = - \left. \frac{d}{dt} \right|_{t=0} {\mathcal F}_k (V_k + t W) = {\rm Fut}_{V_k, k} (W)
\]
for any $F \in {\rm Aut}_r (X)$ and $W \in {\mathfrak k}^{\mathbb C}$. In particular, set $F_s := \exp(s V)$ ($V \in {\mathfrak k}^{\mathbb C}$) and differentiating at $s=0$ yields
\[
{\rm Fut}_{V_k, k}([V,W])=0.
\]
Moreover, the formula ${\mathfrak k}^{\mathbb C}={\mathfrak t}_c^{\mathbb C} \oplus [{\mathfrak k}^{\mathbb C},{\mathfrak k}^{\mathbb C}]$ yields that ${\rm Fut}_{V_k, k}$ vanishes on the entire space ${\mathfrak k}^{\mathbb C}$. This completes the proof. 
\end{proof}
\subsection{The $g$-Bergman measure and quantized functionals} \label{sect:3.2}
Let $X$ be a Fano manifold, $\mu$ a $T$-invariant measure, and ${\mathcal H}_k$ the space of hermitian inner products on $H^0(X, -kK_X)$. We define two operators:
\[
Hilb_{k, \mu, g} \colon {\mathcal H}(X,-K_X)^T \rightarrow {\mathcal H}_k^T,
\]
\[
FS_k \colon {\mathcal H}_k^T \rightarrow {\mathcal H}(X,-K_X)^T
\]
by the formula:
\begin{equation}
||s_i^{(k)}||_{Hilb_{k, \mu, g} (\phi)}^2 := g(\lambda_i^{(k)}/k)^{-1} \int_X |s_i^{(k)}|^2 e^{-k \phi} d \mu \;\;\; \text{for $s_i^{(k)} \in E_{\lambda_i^{(k)}}$}, \label{eq:3.4}
\end{equation}
\begin{equation}
FS_k (H):= \frac{1}{k} \log \left( \frac{1}{N_k} \sum_{i=1}^{N_k} |s_i|^2 \right), \label{eq:3.5}
\end{equation}
where we denote an $H$-orthonormal basis by $\{s_i\}$ and extend $Hilb_{k, \mu, g}$ to a hermitian inner product on the space $H^0(X, -kK_X)$ by requiring that the different subspace $E_{\lambda_i^{(k)}}$ are orthogonal to each other. We note that the map $FS_k$ is independent of a choice of an $H$-orthonormal basis $\{s_i\}$. Actually, $FS_k (H)$ is just the pull-back of the Fubini-Study metric with respect to $H$. In the case when $\mu=\mu_{\phi}$, we drop the explicit dependence of $\mu$ from the notation and simply write
\[
Hilb_{k, g} (\phi) := Hilb_{k, \mu_\phi, g} (\phi).
\]
Let $H_0:=Hilb_{k, \mu_0} (\phi_0) \in {\mathcal H}_k^{T_c}$ ($\phi_0 \in {\mathcal H}(X,-K_X)^{T_c}$) be a reference metric. We normalize $g$ so that $g \nu_k$ is a probability measure on $P$. We define the quantization of the functional ${\mathcal E}_g$ as
\begin{equation}
{\mathcal E}_g^{(k)} (H) = \sum_{\lambda_i^{(k)} \in P_k} g(\lambda_i^{(k)}/k) {\mathcal E}_{E_{\lambda_i^{(k)}}}^{(k)} (H), \;\; {\mathcal E}_{E_{\lambda_i^{(k)}}}^{(k)} (H) = - \frac{1}{kN_k} \log \det H|_{E_{\lambda_i^{(k)}}}, \label{eq:3.6}
\end{equation}
where we compute the determinant in reference to the metric $H_0$. 
We have an isomorphism
\[
{\mathcal H}_k \simeq GL(N_k, {\mathbb C})/U(N_k)
\]
with respect to $H_0$, which implies that ${\mathcal H}_k$ is a Riemannian symmetric space and therefore geodesics are given as the decomposition of the exponential map and the projection $GL(N_k,{\mathbb C}) \rightarrow GL(N_k,{\mathbb C})/U(N_k)$.
Let $s_i^{(k)} \in E_{\lambda_i^{(k)}}$ be an $H_0$-orthonormal and $H_t$-orthogonal basis. Then any geodesic can be represented by $H_t(s_i^{(k)}, s_i^{(k)}) = e^{-\mu_i^{(k)}t} H_0 (s_i^{(k)}, s_i^{(k)})$ for some $\mu_i^{(k)} \in {\mathbb R}$. Thus we have
\[
\frac{d}{dt} {\mathcal E}_g^{(k)} (H_t) = \frac{1}{kN_k} \sum_{i=1}^{N_k} g(\lambda_i^{(k)}/k) \mu_i^{(k)}.
\]
Hence the functional ${\mathcal E}_g^{(k)}$ has linear growth along geodesics.
We define the quantization of the functionals $J_g$, ${\mathcal D}_g$ as follows:
\begin{equation}
J_g^{(k)} (H) := - {\mathcal E}_g^{(k)}(H) + {\mathcal L}_{\mu_0}(FS_k(H)), \label{eq:3.7}
\end{equation}
\begin{equation}
{\mathcal D}_g^{(k)} (H) := - {\mathcal E}_g^{(k)}(H) + {\mathcal L}(FS_k(H)). \label{eq:3.8}
\end{equation}
These functionals are invariant under scaling of metrics and descend to functionals on the space ${\mathcal H}_k^{T_c}/ {\mathbb R}$. When $g \equiv 1$, we simply write these functionals by ${\mathcal E}^{(k)}$, $J^{(k)}$ and ${\mathcal D}^{(k)}$ respectively.

Now we will explain that the quantized functionals ${\mathcal E}_g^{(k)}$, $J_g^{(k)}$ and ${\mathcal D}_g^{(k)}$ are, to the letter, the quantization of the corresponding functionals on ${\mathcal H}(X, -K_X)^T$ respectively. We start with mentioning the $g$-Bergman function:
\begin{equation}
\rho_{k, \mu_0, g} (\phi):= \sum_{\lambda_i^{(k)} \in P_k} g(\lambda_i^{(k)}/k) \rho_{k, \mu_0} (\phi) \label{eq:3.9}
\end{equation}
and $g$-Bergman measure
\begin{equation}
\beta_{k, \mu_0, g} (\phi):= \frac{1}{N_k} \rho_{k, \mu_0, g} (\phi) \cdot \mu_0, \label{eq:3.10}
\end{equation}
where $\rho_{k, \mu_0, g} (\phi)$ is the ordinary Bergman function of the subspace $E_{\lambda_i^{(k)}}$. We use the following convergence of measures:
\begin{proposition}[\cite{BN14}, Proposition 4.4] \label{prop:3.2}
Assume that $g$ is smooth, then for any $\phi \in {\mathcal H}(X,-K_X)^T$, we have the uniform convergence
\[
\beta_{k, \mu_0, g} (\phi) \rightarrow MA_g (\phi).
\]
\end{proposition}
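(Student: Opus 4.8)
The plan is to deduce the statement from the classical ($g\equiv 1$) Bergman measure asymptotics together with an equivariant refinement of it, followed by a density argument in the variable $g$. All the weights $\lambda_i^{(k)}/k$ lie in a fixed compact set $Q\subset{\mathfrak t}^*$ independent of $k$ (one may take $Q=P$), and the Tian--Zelditch--Catlin expansion gives, uniformly on $X$, that $\frac1{N_k}\rho_{k,\mu_0}(\phi)$ converges to the density of $MA(\phi)$ relative to the smooth positive measure $\mu_0$; in particular $\frac1{N_k}\rho_{k,\mu_0}(\phi)$ is bounded uniformly in $k$. Since $\rho_{k,\mu_0,g}(\phi)=\sum_i g(\lambda_i^{(k)}/k)\,\rho_{k,\mu_0}^{E_{\lambda_i^{(k)}}}(\phi)$ depends linearly on $g|_Q$ and $\bigl|\rho_{k,\mu_0,g}(\phi)-\rho_{k,\mu_0,p}(\phi)\bigr|\le\|g-p\|_{L^\infty(Q)}\,\rho_{k,\mu_0}(\phi)$, the uniform bound shows, by Stone--Weierstrass, that it suffices to prove $\beta_{k,\mu_0,p}(\phi)\to p(m_\phi)\,MA(\phi)$ uniformly on $X$ for each monomial $p$ on ${\mathfrak t}^*\simeq{\mathbb R}^m$.

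The core step is the equivariant moment asymptotics: for every multi-index $a$,
\[
\frac1{N_k}\sum_i\Bigl(\frac{\lambda_i^{(k)}}{k}\Bigr)^{a}\rho_{k,\mu_0}^{E_{\lambda_i^{(k)}}}(\phi)\cdot\mu_0\ \longrightarrow\ m_\phi^{\,a}\,MA(\phi)\qquad\text{uniformly on }X.
\]
I would obtain this from the equivariant Bergman kernel expansion (Ma--Marinescu in general, Shiffman--Zelditch in the torus case): the infinitesimal generators of $T^{\mathbb C}$ act on each weight space $E_{\lambda_i^{(k)}}$ by the scalars $\lambda_{i,j}^{(k)}$, so $\sum_i\lambda_i^{(k)}\rho_{k,\mu_0}^{E_{\lambda_i^{(k)}}}(\phi)$ is the restriction to the diagonal of the Bergman projection composed with the moment operator, and the equivariant expansion identifies its leading term with $k\,m_\phi(x)$ times the full Bergman density, uniformly in $x$; this gives the case $|a|=1$. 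Because these commuting operators act on each $E_{\lambda_i^{(k)}}$ by scalars, iterating the argument produces an arbitrary monomial. Equivalently, the probability measures $\mu_{k,x}:=\sum_i\rho_{k,\mu_0}(\phi)(x)^{-1}\,\rho_{k,\mu_0}^{E_{\lambda_i^{(k)}}}(\phi)(x)\,\delta_{\lambda_i^{(k)}/k}$ concentrate at $m_\phi(x)$ as $k\to\infty$, uniformly in $x$ --- the pointwise strengthening of Proposition \ref{prop:3.1}.

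Granting the displayed convergence, linearity yields $\beta_{k,\mu_0,p}(\phi)\to p(m_\phi)\,MA(\phi)$ uniformly for all polynomials $p$; then, given $\varepsilon>0$, choosing $p$ with $\|g-p\|_{L^\infty(Q)}<\varepsilon$ and combining with the uniform bound of the first paragraph together with $\|p(m_\phi)-g(m_\phi)\|_{L^\infty(X)}<\varepsilon$ gives the claim after letting $k\to\infty$ and then $\varepsilon\to 0$. The main obstacle is precisely the uniform equivariant moment asymptotics of the middle paragraph: Proposition \ref{prop:3.1} controls the weights only in aggregate, whereas here one needs the weight-space Bergman densities $\rho_{k,\mu_0}^{E_{\lambda_i^{(k)}}}(\phi)$ pointwise and uniformly on $X$, which is where the localization of the Bergman kernel around the moment-map fibres $\{m_\phi=\lambda_i^{(k)}/k\}$ must be established. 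Once that localization is in hand, the remaining steps (linearity, the uniform $L^\infty$ bound on $\frac1{N_k}\rho_{k,\mu_0}(\phi)$, and Stone--Weierstrass) are routine.
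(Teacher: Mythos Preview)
The paper does not give its own proof of this proposition: it is quoted verbatim from \cite[Proposition~4.4]{BN14} and used as a black box, so there is no argument in the paper to compare against.

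Your outline is a sound route to the result. The reduction by Stone--Weierstrass is correct once you know that all rescaled weights $\lambda_i^{(k)}/k$ lie in the fixed polytope $P$ (this holds, as the weights of $H^0(X,-kK_X)$ are the lattice points of $kP$), and once you have the uniform bound on $\tfrac{1}{N_k}\rho_{k,\mu_0}(\phi)$ from Tian--Zelditch--Catlin. The substantive step is exactly the one you flag as the main obstacle: the pointwise, uniform-in-$x$ concentration of the probability measures $\mu_{k,x}$ at $m_\phi(x)$, equivalently the equivariant moment asymptotics for the weight-space Bergman densities. This is precisely the content of the cited result in \cite{BN14}; it can indeed be obtained from the equivariant (Berezin--Toeplitz) Bergman kernel expansion as you indicate. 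One small point: your ``iteration'' for higher monomials is better phrased through the Toeplitz calculus --- applying the infinitesimal generators to the Bergman kernel gives $\sum_i (\lambda_i^{(k)}/k)^a \rho_{k,\mu_0}^{E_{\lambda_i^{(k)}}}(\phi)$ directly as the diagonal of an iterated Toeplitz operator, whose leading symbol is $m_\phi^{\,a}$ with the standard $O(k^{-1})$ remainder, uniform on $X$; this makes the passage from $|a|=1$ to general $a$ rigorous rather than heuristic. With that clarification your argument is complete.
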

Now we are ready to prove the quantization formula (cf. \cite[Proposition 4.5]{BN14}).
\begin{proposition} \label{prop:3.3}
The following pointwise convergence holds as $k \rightarrow \infty$:
\[
{\mathcal E}_g^{(k)} (Hilb_{k, \mu_0} (\phi)) \rightarrow {\mathcal E}_g (\phi),
\]
\[
J_g^{(k)} (Hilb_{k, \mu_0} (\phi)) \rightarrow J_g (\phi),
\]
\[
{\mathcal D}_g^{(k)} (Hilb_{k, \mu_0} (\phi)) \rightarrow {\mathcal D}_g (\phi).
\]
\end{proposition}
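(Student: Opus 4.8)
The plan is to reduce all three statements to a single one, the energy convergence $\mathcal{E}_g^{(k)}(Hilb_{k,\mu_0}(\phi))\to\mathcal{E}_g(\phi)$, together with the uniform convergence $FS_k(Hilb_{k,\mu_0}(\phi))\to\phi$. For the energy I would join the reference weight $\phi_0$ to $\phi$ by the affine path $\phi_t:=\phi_0+t(\phi-\phi_0)$, $t\in[0,1]$; since $\omega_{\phi_t}=(1-t)\omega_{\phi_0}+t\,\omega_\phi>0$ this is a smooth curve in $\mathcal{H}(X,-K_X)^T$ with $\dot\phi_t=\phi-\phi_0$. Put $H_t:=Hilb_{k,\mu_0}(\phi_t)$. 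The entries of each block $H_t|_{E_{\lambda_i^{(k)}}}$ are, by \eqref{eq:3.4}, integrals depending smoothly on $t$, so Jacobi's formula for the derivative of $\log\det$ applies to \eqref{eq:3.6}; evaluating the resulting trace in an $H_t$-orthonormal basis of $E_{\lambda_i^{(k)}}$ at the point of differentiation and using \eqref{eq:3.9}, \eqref{eq:3.10}, one obtains
\[
\frac{d}{dt}\,\mathcal{E}_g^{(k)}(H_t)=\int_X(\phi-\phi_0)\,\beta_{k,\mu_0,g}(\phi_t).
\]
On the continuous side the defining differential of $\mathcal{E}_g$ gives the matching identity $\tfrac{d}{dt}\mathcal{E}_g(\phi_t)=\int_X(\phi-\phi_0)\,MA_g(\phi_t)$.

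Now each $\beta_{k,\mu_0,g}(\phi_t)$ is a probability measure on $X$ (by the chosen normalization of $g$), so the integrands $t\mapsto\int_X(\phi-\phi_0)\beta_{k,\mu_0,g}(\phi_t)$ are bounded, uniformly in $k$ and $t$, by $\|\phi-\phi_0\|_{L^\infty}$, and by Proposition \ref{prop:3.2} they converge pointwise in $t$ to $\int_X(\phi-\phi_0)MA_g(\phi_t)$. The dominated convergence theorem in the variable $t$, together with the normalizations $\mathcal{E}_g^{(k)}(H_0)=0$ (the determinants in \eqref{eq:3.6} are taken relative to $H_0$) and $\mathcal{E}_g(\phi_0)=0$, then yields
\[
\mathcal{E}_g^{(k)}(Hilb_{k,\mu_0}(\phi))=\int_0^1\frac{d}{dt}\,\mathcal{E}_g^{(k)}(H_t)\,dt\ \longrightarrow\ \int_0^1\frac{d}{dt}\,\mathcal{E}_g(\phi_t)\,dt=\mathcal{E}_g(\phi).
\]

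For the Fubini--Study map, applying \eqref{eq:3.5} to $H=Hilb_{k,\mu_0}(\phi)$ gives the identity $FS_k(Hilb_{k,\mu_0}(\phi))=\phi+\tfrac1k\log\!\big(\tfrac1{N_k}\rho_{k,\mu_0}(\phi)\big)$, where $\tfrac1{N_k}\rho_{k,\mu_0}(\phi)$ is exactly the $\mu_0$-density of the ($g\equiv1$) Bergman measure $\beta_{k,\mu_0,1}(\phi)$ of the full space $H^0(X,-kK_X)$. By Proposition \ref{prop:3.2} with $g\equiv1$ this density converges uniformly to the ratio $MA(\phi)/\mu_0$, a continuous function bounded between two positive constants; hence its logarithm is uniformly bounded and $FS_k(Hilb_{k,\mu_0}(\phi))\to\phi$ uniformly on $X$. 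Both $\mathcal{L}_{\mu_0}(\psi)=\int_X(\psi-\phi_0)\,d\mu_0$ and $\mathcal{L}(\psi)=-\log\int_X d\mu_\psi$ are continuous with respect to uniform convergence of weights: the first because $\mu_0$ is a probability measure, the second because $\psi_j\to\phi$ uniformly forces $e^{-(\psi_j)_U}\to e^{-\phi_U}$ uniformly in each of finitely many coordinate charts and hence $\int_X d\mu_{\psi_j}\to\int_X d\mu_\phi\in(0,\infty)$. Combining this with the energy convergence and the definitions \eqref{eq:3.7}, \eqref{eq:3.8} gives $J_g^{(k)}(Hilb_{k,\mu_0}(\phi))\to-\mathcal{E}_g(\phi)+\mathcal{L}_{\mu_0}(\phi)=J_g(\phi)$ and $\mathcal{D}_g^{(k)}(Hilb_{k,\mu_0}(\phi))\to-\mathcal{E}_g(\phi)+\mathcal{L}(\phi)=\mathcal{D}_g(\phi)$.

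The only genuinely substantial input is Proposition \ref{prop:3.2}, the uniform convergence of the $g$-Bergman measure, which is already in hand; the one real point of care is passing $k\to\infty$ under the $t$-integral in the energy step. I avoid having to invoke uniformity of the Bergman asymptotics in the parameter $t$ by observing that the $\beta_{k,\mu_0,g}(\phi_t)$ are probability measures, so a single constant dominates all the integrands and the elementary dominated convergence theorem suffices. Everything else — Jacobi's formula and the smooth dependence of $H_t$ on $t$, the derivative formula for $\mathcal{E}_g$, and the continuity of $\mathcal{L}$ and $\mathcal{L}_{\mu_0}$ — is routine; the only bookkeeping to keep straight is that $g$ is normalized against the spectral measure $\nu_k$ in the quantized functionals and against the Duistermaat--Heckman measure $\nu^T$ in $MA_g$, but since $\int_P g\,d\nu_k\to\int_P g\,d\nu^T$ by Proposition \ref{prop:3.1} the resulting discrepancy is a scalar factor tending to $1$ and does not affect any of the limits.
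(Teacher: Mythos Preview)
Your proof is correct and follows essentially the same route as the paper's: connect $\phi_0$ to $\phi$ by the affine path, differentiate $\mathcal{E}_g^{(k)}\circ Hilb_{k,\mu_0}$ to obtain the integral of $(\phi-\phi_0)$ against the $g$-Bergman measure (the paper simply cites \cite[Proposition~4.5]{BN14} for this identity, whereas you derive it via Jacobi's formula), invoke Proposition~\ref{prop:3.2} to pass to $MA_g$, and then read off the $J_g^{(k)}$ and $\mathcal{D}_g^{(k)}$ limits from the uniform convergence $FS_k\circ Hilb_{k,\mu_0}(\phi)\to\phi$ together with the definitions \eqref{eq:3.7}, \eqref{eq:3.8}. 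Your write-up is in fact more complete than the paper's on two points the latter leaves implicit: the justification for exchanging $k\to\infty$ with the $t$-integral (your dominated-convergence argument using that $\beta_{k,\mu_0,g}(\phi_t)$ is a probability measure is exactly what is needed), and the bookkeeping around the two normalizations of $g$ against $\nu_k$ versus $\nu^T$.
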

\begin{proof}
The direct computation yields that
\begin{eqnarray*}
{\mathcal E}_g^{(k)} (Hilb_{k, \mu_0} (\phi)) &=& \int_0^{1} \left( \frac{d}{dt}{\mathcal E}_g^{(k)} (Hilb_{k, \mu_0} (t \phi + (1-t)\phi_0) \right) dt \\
&\hbox{}& (\text{because ${\mathcal E}_g^{(k)} (Hilb_{k, \mu_0} (\phi_0)) = 0$})\\
&=& \int_0^{1} \int_X (\phi- \phi_0) \beta_{k, \mu_0, g} (t \phi + (1-t) \phi_0) \\
&\hbox{}& (\text{by \cite[Proposition 4.5]{BN14}}).
\end{eqnarray*}
Combining with Proposition \ref{prop:3.2}, we have
\[
{\mathcal E}_g^{(k)} (Hilb_{k, \mu_0} (\phi)) \rightarrow \int_0^{1} \int_X (\phi- \phi_0) MA_g (t \phi + (1-t) \phi_0) = {\mathcal E}_g (\phi).
\]
Moerover, by definition,
\begin{equation}
FS_k \circ Hilb_{k, \mu_0, g} (\phi) - \phi = \frac{1}{k} \log \left( \frac{1}{N_k} \rho_{k, \mu_0, g} (\phi) \right). \label{eq:3.11}
\end{equation}
Thus using Proposition \ref{prop:3.2} again, we have a uniform convergence
\[
FS_k \circ Hilb_{k, \mu_0, g} (\phi) \rightarrow \phi.
\]
The last two parts follow from the defining equations \eqref{eq:3.7}, \eqref{eq:3.8} and pointwise convergence ${\mathcal E}_g^{(k)} \circ Hilb_{k, \mu_0} \rightarrow {\mathcal E}_g$. 
\end{proof}
\subsection{Modification of quantized K\"ahler-Ricci solitons} \label{sect:3.3}
We adopt the same notation as in Section \ref{sect:3.2}. We set $T=T_{KS}$.
\begin{definition}[\cite{BN14}, Section 4] \label{defi:3.1}
We say that a metric $H_k \in {\mathcal H}_k^{T_{KS}}$ is a quantized K\"ahler-Ricci soliton if it satisfies the equation
\[
Hilb_{k, g_{V_{KS}}} \circ FS_k (H_k) = H_k.
\]
\end{definition}
Berman-Nystr\"om showed the following:
\begin{theorem}[\cite{BN14}, Theorem 1.7] \label{theo:3.1}
Assume that $(X, V_{KS})$ is strongly analytically K-polystable (i.e., the corresponding modified Ding functional is coercive modulo ${\rm Aut}_0(X, V_{KS})$) and all the higher order modified Futaki invariants of $(X, V_{KS})$  vanish, then there exists a quantized K\"ahler-Ricci soliton if $k$ is sufficiently large, which is unique modulo the action of ${\rm Aut}_0(X, V_{KS})$ and as $k \rightarrow \infty$, the corresponding Bergman metrics on $X$ converge weakly, modulo automorphisms, to a K\"ahler-Ricci soliton on $(X,V_{KS})$.
\end{theorem}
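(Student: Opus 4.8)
The plan is to deduce Theorem~\ref{theo:3.1} from two ingredients: the coercivity, modulo ${\rm Aut}_0(X,V_{KS})$, of the quantized modified Ding functional ${\mathcal D}_{g_{V_{KS}}}^{(k)}$ on ${\mathcal H}_k^{T_{KS}}$ for $k$ large, and the variational principle that a functional which is coercive and convex along geodesics of the finite-dimensional symmetric space ${\mathcal H}_k^{T_{KS}}$ attains its infimum at a point solving the balanced equation of Definition~\ref{defi:3.1}. First I would record that the hypotheses force ${\rm Fut}_{V_{KS},k}\equiv 0$ on ${\mathfrak k}^{\mathbb C}$: by the equivariant Riemann--Roch expansion ${\rm Fut}_{V_{KS},k}(W)=\sum_{i\geq 0}{\rm Fut}_{V_{KS}}^{(i)}(W)k^{n+1-i}$, the leading term ${\rm Fut}_{V_{KS}}^{(0)}$ is a positive multiple of the classical modified Futaki invariant ${\rm Fut}_{V_{KS}}$, which vanishes since $V_{KS}$ minimizes ${\mathcal F}$, while the remaining terms vanish by hypothesis. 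As in \cite[Lemma~3.4]{BN14}, $\frac{d}{dt}{\mathcal D}_{g_{V_{KS}}}^{(k)}(\exp(tW)^*H)$ is a fixed multiple of ${\rm Fut}_{V_{KS},k}(W)$ and hence zero, so ${\mathcal D}_{g_{V_{KS}}}^{(k)}$ descends to ${\mathcal H}_k^{T_{KS}}/{\rm Aut}_0(X,V_{KS})$.

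The core step is to transplant the continuous coercivity of Definition~\ref{defi:2.1} to the quantized level with constants uniform in $k$, i.e.\ to prove
\[
{\mathcal D}_{g_{V_{KS}}}^{(k)}(H)\ \geq\ \delta\,\underset{F\in{\rm Aut}_0(X,V_{KS})}{\rm inf}J^{(k)}(F^*H)\ -\ C
\]
for $k\gg 0$ with $\delta>0$ and $C$ independent of $k$. Writing $\phi:=FS_k(H)$, I would combine the monotonicity of the quantized functionals under the operator $Hilb_{k,g_{V_{KS}}}\circ FS_k$, the quantization formula of Proposition~\ref{prop:3.3}, and uniform (in $k$) comparisons between $J^{(k)}$ and $J\circ FS_k$ and between ${\mathcal L}\circ FS_k$ on the quantized and continuous sides --- all resting on the Bergman-kernel asymptotics behind Propositions~\ref{prop:3.2} and~\ref{prop:3.3} --- so as to reduce the displayed estimate to the continuous coercivity of ${\mathcal D}_{g_{V_{KS}}}$. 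Since ${\mathcal E}_g^{(k)}$ is affine along geodesics of ${\mathcal H}_k$ (as computed in Section~\ref{sect:3.2}), ${\mathcal L}\circ FS_k$ is convex along them (Berndtsson-type convexity), and $J^{(k)}$ has at least linear growth along geodesics, the displayed estimate implies that, for each $k\gg 0$, sublevel sets of ${\mathcal D}_{g_{V_{KS}}}^{(k)}$ are relatively compact modulo ${\rm Aut}_0(X,V_{KS})$, and moreover the minimizers stay in a fixed compact set of ${\mathcal H}_k^{T_{KS}}/({\rm Aut}_0(X,V_{KS})\times{\mathbb R})$ uniformly as $k\rightarrow\infty$.

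Existence then follows: ${\mathcal D}_{g_{V_{KS}}}^{(k)}$ attains its infimum at some $H_k\in{\mathcal H}_k^{T_{KS}}$, and computing the first variation of ${\mathcal D}_g^{(k)}=-{\mathcal E}_g^{(k)}+{\mathcal L}\circ FS_k$ from the first variation of ${\mathcal E}_g^{(k)}$ of Section~\ref{sect:3.2} and the explicit expressions \eqref{eq:3.4} and \eqref{eq:3.5}, the Euler--Lagrange equation is exactly $Hilb_{k,g_{V_{KS}}}\circ FS_k(H_k)=H_k$, i.e.\ $H_k$ is a quantized K\"ahler--Ricci soliton. Uniqueness modulo ${\rm Aut}_0(X,V_{KS})$ follows from the strict convexity of ${\mathcal D}_{g_{V_{KS}}}^{(k)}$ along geodesics of ${\mathcal H}_k^{T_{KS}}$ transverse to the automorphism orbits. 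For convergence, set $\phi_k:=FS_k(H_k)$; the uniform coercivity gives $\sup_k\inf_F J(F^*\phi_k)<\infty$, so after applying automorphisms and using that $J$ is an exhaustion function on ${\mathcal E}^1(X,-K_X)/{\mathbb R}$ (Remark~\ref{rema:2.1}), a subsequence of $\{\phi_k\}$ converges weakly in $L^1$ to some $\phi_\infty\in{\mathcal E}^1(X,-K_X)^{T_{KS}}$. Passing to the limit in the balanced equation --- using Proposition~\ref{prop:3.2} and the lower semicontinuity of $-{\mathcal E}_{g_{V_{KS}}}$ and of ${\mathcal L}$ --- identifies $\phi_\infty$ as a minimizer of ${\mathcal D}_{g_{V_{KS}}}$, hence a K\"ahler--Ricci soliton on $(X,V_{KS})$ by Theorem~\ref{theo:2.1}.

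The step I expect to be the main obstacle is the uniform-in-$k$ coercivity of the second paragraph: Proposition~\ref{prop:3.3} only yields pointwise convergence of the functionals, whereas here one needs quantitative estimates valid on unbounded sublevel sets and with constants $\delta,C$ independent of $k$; this forces one to use the finer asymptotics of the (twisted) Bergman measures and to keep careful track of how ${\rm Aut}_0(X,V_{KS})$ acts on ${\mathcal H}_k$ versus on ${\mathcal H}(X,-K_X)$. The limiting step is also delicate, since it is carried out in the singular finite-energy class ${\mathcal E}^1$ and relies on the stability of the soliton equation under weak convergence.
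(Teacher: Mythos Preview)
The paper does not give its own proof of Theorem~\ref{theo:3.1}: it is quoted verbatim as \cite[Theorem~1.7]{BN14} and serves only as the benchmark which the paper's Theorem~\ref{theo:1.2} improves. So there is no ``paper's proof'' of this statement to compare against directly. That said, the proof of Theorem~\ref{theo:1.2} is explicitly modeled on the Berman--Nystr\"om argument for Theorem~\ref{theo:3.1}, and specializing it to $V_k\equiv V_{KS}$ (so that Lemma~\ref{lemm:3.5} becomes vacuous) recovers exactly the proof scheme you outline: the vanishing of all ${\rm Fut}_{V_{KS}}^{(i)}$ forces ${\rm Fut}_{V_{KS},k}\equiv 0$, hence ${\mathcal D}_{g_{V_{KS}}}^{(k)}$ is ${\rm Aut}_0(X,V_{KS})$-invariant; one then transfers the continuous coercivity of ${\mathcal D}_{g_{V_{KS}}}$ to a uniform-in-$k$ coercivity of ${\mathcal D}_{g_{V_{KS}}}^{(k)}$; existence and uniqueness of the quantized soliton follow from convexity and exhaustion on ${\mathcal H}_k^{T_{KS}}/{\mathbb R}$; and weak convergence to a K\"ahler--Ricci soliton follows by compactness in ${\mathcal E}^1$ and lower semicontinuity of ${\mathcal D}_{g_{V_{KS}}}$.

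Your identification of the main obstacle is exactly right, and the paper (following \cite{BN14}) resolves it not through Proposition~\ref{prop:3.3} but via the sharper one-sided comparison of Lemma~\ref{lemm:3.4}: $J_{g_{V_{KS}}}(FS_k(H))\leq (1+\delta_k)J_{g_{V_{KS}}}^{(k)}(H)+\delta_k$ with $\delta_k\to 0$. This is precisely the ``quantitative estimate valid on unbounded sublevel sets with constants independent of $k$'' that you flag as missing; once you have it, the chain of inequalities \eqref{eq:3.12}--\eqref{eq:3.13} and the paragraph following them (with $V_k$ replaced by $V_{KS}$, so the $\epsilon_k,\epsilon_k'$ terms disappear) gives the uniform coercivity directly. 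The convergence argument you sketch also matches: one normalizes $\phi_k$ to minimize $J$ on its ${\rm Aut}_0(X,V_{KS})$-orbit, uses the minimizing property plus Lemma~\ref{lemm:3.4} to bound $J(\phi_k)$, extracts a weak limit in ${\mathcal E}^1$, and concludes by lower semicontinuity that the limit minimizes ${\mathcal D}_{g_{V_{KS}}}$; strict convexity of $J$ on the orbit of solitons then upgrades subsequential to full convergence. In short, your plan is the correct one and is essentially the argument the paper adapts; the only genuine gap is that you would need to prove (or cite) Lemma~\ref{lemm:3.4} rather than rely on the pointwise Proposition~\ref{prop:3.3}.
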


We want to weaken the assumption in the above theorem. For this, we set $T=T_c$ and introduce a slight modification of the notion of quantized K\"ahler-Ricci solitons:
\begin{definition} \label{defi:3.2}
Let $\{V_k\}$ be a sequence of holomorphic vector fields constructed in Section \ref{sect:3.1}.
We say that a metric $H_k \in {\mathcal H}_k^{T_c}$ is a quantized K\"ahler-Ricci soliton attached to $V_k$ if it satisfies the equation
\[
Hilb_{k, g_{V_k}} \circ FS_k (H_k) = H_k.
\]
\end{definition}
Then the quantized K\"ahler-Ricci soliton attached to $V_k$ are characterized as critical points of the quantization of the modified Ding functional ${\mathcal D}_{g_{V_k}}^{(k)}$. Moreover, by \cite[Proposition 4.7]{BN14}, we have
\[
{\mathcal D}_{g_{V_k}}^{(k)} (\exp(tW)^*H_0)=\frac{{\rm Fut}_{V_k, k} (W)}{k N_k}.
\]
\begin{proof}[Proof of the Theorem \ref{theo:1.2}]
This proof is mostly based on the original proof given by Berman-Nystr\"om. 
The reader should refer to \cite[Theorem 1.7]{BN14}.

The coercivity of ${\mathcal D}_{g_{V_{KS}}}$ implies that the equation
\[
{\mathcal D}_{g_{V_{KS}}} (FS_k(H)) \geq \delta J(FS_k(F^*H))-C
\]
holds for some $F \in {\rm Aut}_0 (X,V_{KS})$, where we note that two operations $FS_k$ and $F^*$ are commutative.
Then the LHS can be written as
\begin{eqnarray*}
{\mathcal D}_{g_{V_{KS}}} (FS_k(H)) &=& {\mathcal D}_{g_{V_{KS}}} (FS_k(F^*H)) \;\;\; (\text{because ${\rm Fut}_{V_{KS}} \equiv 0)$} \\
&=& J_{g_{V_{KS}}} (FS_k (F^*H)) +({\mathcal L} - {\mathcal L}_{\mu_0})(FS_k (F^*H)).
\end{eqnarray*}
On the other hand, since $g_{V_{KS}}$ is bounded, we obtain
\[
\delta J(FS_k(F^*H))-C \geq \delta' J_{g_{V_{KS}}}(FS_k(F^*H))-C
\]
for sufficiently small $\delta'>0$ depending only on $g_{V_{KS}}$.
Thus we obtain
\begin{equation}
J_{g_{V_{KS}}}(FS_k(F^*H))(1-\delta') + ({\mathcal L} - {\mathcal L}_{\mu_0})(FS_k (F^*H)) \geq -C. \label{eq:3.12}
\end{equation}
Now we use the following lemma, which compares the two functionals $J_{g_{V_{KS}}} \circ FS_k$ and $J_{g_{V_{KS}}}^{(k)}$:
\begin{lemma}[\cite{BN14}, Lemma 4.10] \label{lemm:3.4}
There exists a sequence $\delta_k \rightarrow 0$ of positive numbers such that
\[
J_{g_{V_{KS}}}(FS_k(H)) \leq (1+ \delta_k) J_{g_{V_{KS}}}^{(k)}(H) + \delta_k.
\]
\end{lemma}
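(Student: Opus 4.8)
The plan is to read off, directly from the definitions \eqref{eq:3.7} of the two $J$-functionals (with $g:=g_{V_{KS}}$), the exact identity
\[
J_{g_{V_{KS}}}(FS_k(H)) - J_{g_{V_{KS}}}^{(k)}(H) = {\mathcal E}_{g_{V_{KS}}}^{(k)}(H) - {\mathcal E}_{g_{V_{KS}}}(FS_k(H)) =: G_k(H),
\]
since the ${\mathcal L}_{\mu_0}$-terms in the two definitions are literally the same. So the lemma reduces to producing a sequence $\delta_k \to 0$ with $G_k(H) \le \delta_k\big(1 + J_{g_{V_{KS}}}^{(k)}(H)\big)$ for every $H \in {\mathcal H}_k^{T_c}$; since both sides descend to ${\mathcal H}_k^{T_c}/{\mathbb R}$, I may normalize the scaling of $H$ freely. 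At the reference point this is already under control: ${\mathcal E}_{g_{V_{KS}}}^{(k)}(H_0) = 0$ by the choice of normalization in \eqref{eq:3.6}, while by \eqref{eq:3.11} (applied with $g \equiv 1$) together with Proposition \ref{prop:3.2} one has $FS_k(H_0) \to \phi_0$ uniformly, so $|{\mathcal E}_{g_{V_{KS}}}(FS_k(H_0))| \le \delta_k'$ with $\delta_k' \to 0$ (here ${\mathcal E}_{g_{V_{KS}}}$ is $1$-Lipschitz for the sup-norm, its differential being a probability measure on $X$).

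For a general $H$, I would interpolate. Let $H_t$ ($0 \le t \le 1$) be the geodesic in ${\mathcal H}_k^{T_c}$ from $H_0$ to $H_1 = H$, written as in Section \ref{sect:3.2} through an $H_0$-orthonormal, $H_t$-orthogonal basis of weight vectors $\{s_i^{(k)}\}$ with $H_t(s_i^{(k)}, s_i^{(k)}) = e^{-\mu_i^{(k)} t} H_0(s_i^{(k)}, s_i^{(k)})$, and set $\phi_t := FS_k(H_t) = \frac{1}{k}\log\big(\frac{1}{N_k}\sum_i e^{\mu_i^{(k)} t}|s_i^{(k)}|^2\big)$. Then $t \mapsto \phi_t$ is a subgeodesic of psh weights (a $\frac{1}{k}\log$ of a sum of squared pointwise norms of sections holomorphic in $(t,x)$ after complexifying the time parameter), and along it ${\mathcal E}_{g_{V_{KS}}}$ is convex, whereas along $H_t$ the functional ${\mathcal E}_{g_{V_{KS}}}^{(k)}$ is affine (the first fact is in \cite[Section 2]{BN14}; the second is the computation recalled in Section \ref{sect:3.2}). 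Hence $t \mapsto G_k(H_t) = {\mathcal E}_{g_{V_{KS}}}^{(k)}(H_t) - {\mathcal E}_{g_{V_{KS}}}(\phi_t)$ is concave, and therefore lies below its tangent line at $t = 0$:
\[
G_k(H) \le G_k(H_0) + \left.\frac{d}{dt}\right|_{t=0} G_k(H_t) \le \delta_k' + \left( \frac{1}{kN_k}\sum_i g_{V_{KS}}(\lambda_i^{(k)}/k)\, \mu_i^{(k)} - \int_X \dot\phi_0 \, MA_{g_{V_{KS}}}(\phi_0) \right).
\]

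It remains to bound the slope difference on the right, which is the technical core of the statement (and of \cite[Lemma 4.10]{BN14}). The first term is the $g_{V_{KS}}$-weighted spectral average of the $\mu_i^{(k)}/k$, while the second pairs the subgeodesic velocity $\dot\phi_0 = \frac{1}{k}\,(\sum_i \mu_i^{(k)}|s_i^{(k)}|^2)/(\sum_i |s_i^{(k)}|^2)$ at the reference point against $MA_{g_{V_{KS}}}(\phi_0)$. To compare the two one exploits the semiclassical concentration of the $H_0$-normalized sections $|s_i^{(k)}|^2$ near the level sets $\{m_{\phi_0} = \lambda_i^{(k)}/k\}$ — a refinement of Proposition \ref{prop:3.2} coming from Bergman-kernel asymptotics — which makes $\int_X |s_i^{(k)}|^2 e^{-k\phi_0}\, g_{V_{KS}}(m_{\phi_0})\, MA(\phi_0)$ agree with $g_{V_{KS}}(\lambda_i^{(k)}/k)$ up to an error that is $o(1)$ as $k \to \infty$ and, crucially, controlled by how far $FS_k(H)$ has moved from the reference, i.e. by $J_{g_{V_{KS}}}^{(k)}(H)$; integrating this estimate along the geodesic yields $G_k(H) \le \delta_k\big(1 + J_{g_{V_{KS}}}^{(k)}(H)\big)$, and the identity of the first paragraph then gives the asserted inequality. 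The step I expect to be the obstacle is exactly this one: extracting from Proposition \ref{prop:3.2} an error bound that is \emph{uniform in $H$} and that scales with $J_{g_{V_{KS}}}^{(k)}(H)$ rather than with the (a priori uncontrolled) curvature of the Bergman metric $FS_k(H)$; it is this scaling that forces the multiplicative factor $(1 + \delta_k)$ rather than a merely additive $\delta_k$, and it is where one has to fall back on the pluripotential-theoretic estimates developed in \cite{BN14} and \cite{BBGZ13}.
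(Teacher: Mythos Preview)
The paper does not give its own proof of this lemma: it is quoted verbatim as \cite[Lemma 4.10]{BN14} and used as a black box inside the proof of Theorem \ref{theo:1.2}. So there is no in-paper argument to compare your proposal against; any comparison has to be with the proof in \cite{BN14} itself.

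That said, your proposal has a genuine gap. The reduction in your first paragraph is correct: since the ${\mathcal L}_{\mu_0}$-terms cancel, the lemma is equivalent to bounding $G_k(H)={\mathcal E}_{g_{V_{KS}}}^{(k)}(H)-{\mathcal E}_{g_{V_{KS}}}(FS_k(H))$ by $\delta_k(1+J_{g_{V_{KS}}}^{(k)}(H))$. But the convexity you invoke is backwards. Along a psh subgeodesic $t\mapsto\phi_t$ the $g$-Monge--Amp\`ere energy ${\mathcal E}_{g}$ is \emph{concave}, not convex (this is the $g$-analogue of the standard concavity of the Aubin--Yau energy; it is what makes $-{\mathcal E}_g$, and hence ${\mathcal D}_g$, convex along geodesics in \cite[Section 2]{BN14}). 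Consequently $G_k(H_t)=\text{(affine)}-\text{(concave)}$ is \emph{convex} in $t$, and the tangent-line inequality at $t=0$ gives the wrong direction: you only get $G_k(H)\ge G_k(H_0)+G_k'(0)$, not $\le$. The usable inequality from convexity is $G_k(H)\le G_k(H_0)+G_k'(1)$, i.e.\ the slope must be evaluated at the \emph{endpoint} $H$, where the measure entering $d{\mathcal E}_g$ is $MA_g(FS_k(H))$ rather than $MA_g(\phi_0)$. This is precisely why the error cannot be uniform and must be allowed to scale with $J^{(k)}(H)$---the Bergman-kernel asymptotics one needs are at the variable Bergman metric $FS_k(H)$, not at the fixed reference.

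Even after correcting the endpoint, your last paragraph is not a proof but a description of what one hopes is true: you assert that the slope discrepancy is $o(1)$ and ``controlled by $J_{g_{V_{KS}}}^{(k)}(H)$'', and then explicitly say this is the obstacle and that you would ``fall back'' on \cite{BN14} and \cite{BBGZ13}. That is exactly the content of \cite[Lemma 4.10]{BN14}, so at this point the argument is circular. If you want a self-contained proof, you need to actually carry out the $g$-weighted version of the computation in \cite[Section 7]{BBGZ13} (bounding ${\mathcal E}_g(FS_k(H))$ from below via the concavity at $t=1$ together with the equivariant Bergman asymptotics of Proposition \ref{prop:3.2}), keeping track of how the error depends on $\sup_X(FS_k(H)-\phi_0)$ and hence on $J^{(k)}(H)$.
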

Hence if we take $k$ sufficiently large so that $(1+\delta_k)(1-\delta') \leq 1-\frac{\delta'}{2}$ and $\delta_k(1-\delta') \leq C$ hold, we have 
\begin{equation}
J_{g_{V_{KS}}}(FS_k(F^*H))(1-\delta') \leq J_{g_{V_{KS}}}^{(k)} (F^*H) \left( 1-\frac{\delta'}{2} \right) + C. \label{eq:3.13}
\end{equation}
Thus we obtain
\begin{eqnarray*}
{\mathcal D}_{g_{V_{KS}}}^{(k)} (F^*H) &=& J_{g_{V_{KS}}}^{(k)} (F^*H) + ({\mathcal L} - {\mathcal L}_{\mu_0})(FS_k (F^*H)) \\
&\geq& \frac{\delta'}{2} J_{g_{V_{KS}}}^{(k)} (F^*H) -2C \;\;\;(\text{by \eqref{eq:3.12} and \eqref{eq:3.13}}) \\
&\geq& \frac{\delta''}{2} J^{(k)} (F^*H) -2C \;\;\;(\text{because $g_{V_{KS}}$ is bounded}).
\end{eqnarray*}
Now we consider the difference of the two modified Ding functionals:
\[
{\mathcal D}_{g_{V_{KS}}}^{(k)}-{\mathcal D}_{g_{V_k}}^{(k)}=-{\mathcal E}_{g_{V_{KS}}}^{(k)}+{\mathcal E}_{g_{V_k}}^{(k)},
\]
which has linear growth along geodesics explained above. 
On the other hand, the functional $J^{(k)}$ is an exhaustion function on ${\mathcal H}_k^{T_c}/ {\mathbb R}$ and has at least linear growth along geodesics (cf. \cite[Proposition 3]{Don09} or \cite[Lemma 7.6]{BBGZ13}). 
The following Lemma was inspired by these observations:
\begin{lemma} \label{lemm:3.5}
The inequality
\begin{equation}
-\epsilon_k J^{(k)} - \epsilon_k' \leq {\mathcal D}_{g_{V_{KS}}}^{(k)}-{\mathcal D}_{g_{V_k}}^{(k)} \leq \epsilon_k J^{(k)} + \epsilon_k' \label{eq:3.14}
\end{equation}
holds for some sequences of positive numbers $\epsilon_k \rightarrow 0$ and $\epsilon_k' \rightarrow 0$.
\end{lemma}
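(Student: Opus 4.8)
The plan is to bound the difference $-{\mathcal E}_{g_{V_{KS}}}^{(k)}+{\mathcal E}_{g_{V_k}}^{(k)}$ evaluated at $H$ by using the explicit linear-growth formula for ${\mathcal E}_g^{(k)}$ along geodesics together with the convergence $V_k \to V_{KS}$ from Theorem \ref{theo:1.1}. Recall that along a geodesic $H_t$ in ${\mathcal H}_k^{T_c}/{\mathbb R}$ with generators $\mu_i^{(k)}$, one has
\[
\frac{d}{dt}{\mathcal E}_g^{(k)}(H_t)=\frac{1}{kN_k}\sum_{i=1}^{N_k} g(\lambda_i^{(k)}/k)\,\mu_i^{(k)},
\]
so that
\[
\frac{d}{dt}\Bigl({\mathcal E}_{g_{V_{KS}}}^{(k)}-{\mathcal E}_{g_{V_k}}^{(k)}\Bigr)(H_t)
=\frac{1}{kN_k}\sum_{i=1}^{N_k}\bigl(g_{V_{KS}}(\lambda_i^{(k)}/k)-g_{V_k}(\lambda_i^{(k)}/k)\bigr)\mu_i^{(k)}.
\]
Since all weights $\lambda_i^{(k)}/k$ lie in the fixed compact polytope $P$ and $g_{V}=\exp(\langle\xi_V,\cdot\rangle)$ depends continuously on $\xi_V$, the sup-norm difference $\|g_{V_{KS}}-g_{V_k}\|_{C^0(P)}=:\eta_k\to 0$ as $k\to\infty$ by Theorem \ref{theo:1.1}. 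Hence the above derivative is bounded in absolute value by $\eta_k\cdot\frac{1}{kN_k}\sum_i|\mu_i^{(k)}|$.

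The next step is to control $\frac{1}{kN_k}\sum_i|\mu_i^{(k)}|$ by $J^{(k)}$ plus a constant. This is the standard comparison between an $\ell^1$-type functional and the exhaustion function on the symmetric space ${\mathcal H}_k^{T_c}/{\mathbb R}$: by (the $T_c$-equivariant form of) \cite[Proposition 3]{Don09} or \cite[Lemma 7.6]{BBGZ13}, $J^{(k)}$ has at least linear growth along geodesics, more precisely $\frac{d}{dt}J^{(k)}(H_t)\big|_{t=0^+}\ge c_k\cdot\frac{1}{kN_k}\sum_i|\mu_i^{(k)}|$ is not quite what is stated; rather the clean fact is that $J^{(k)}(H)\ge \frac{1}{kN_k}\sum_i(\mu_i^{(k)})^+ - C_k'$ (and symmetrically with $(\mu_i^{(k)})^-$ by reversing the geodesic and using scaling-invariance to renormalize), whence $J^{(k)}(H)\gtrsim \frac{1}{kN_k}\sum_i|\mu_i^{(k)}| - C_k'$ up to a $k$-dependent constant. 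Integrating the derivative bound from $t=0$ to $t=1$ along the geodesic from $H_0$ to $H$ then gives
\[
\bigl|{\mathcal D}_{g_{V_{KS}}}^{(k)}(H)-{\mathcal D}_{g_{V_k}}^{(k)}(H)\bigr|
=\bigl|{\mathcal E}_{g_{V_{KS}}}^{(k)}(H)-{\mathcal E}_{g_{V_k}}^{(k)}(H)\bigr|
\le \eta_k\bigl(J^{(k)}(H)+C_k'\bigr),
\]
and setting $\epsilon_k:=\eta_k$ and $\epsilon_k':=\eta_k C_k'$ yields \eqref{eq:3.14}, provided one arranges $\epsilon_k'\to 0$.

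The main obstacle is the $k$-dependence of the comparison constant $C_k'$ between $\frac{1}{kN_k}\sum_i|\mu_i^{(k)}|$ and $J^{(k)}$: a naive application of the finite-dimensional estimate produces a constant that a priori grows with $k$ (e.g. like $\log N_k$), and one must check it grows slowly enough that $\eta_k C_k'\to 0$. The way to handle this is to track the estimate in \cite[Lemma 7.6]{BBGZ13} quantitatively: the discrepancy between $J^{(k)}$ and the $\ell^1$-norm of the geodesic generators comes from the ${\mathcal L}_{\mu_0}\circ FS_k$ term, which by the uniform Bergman asymptotics (Proposition \ref{prop:3.2}, with $g\equiv 1$) and the Tian/Bouche type $C^0$-bound $\|FS_k\circ Hilb_{k,\mu_0}(\phi)-\phi\|_{C^0}=O(k^{-1}\log k)$ contributes only an $O(k^{-1}\log k)$, hence bounded, error; thus $C_k'$ is in fact bounded uniformly in $k$, and one may even take $C_k'\equiv C$, giving $\epsilon_k'=\eta_k C\to 0$ directly. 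An alternative, cleaner route avoiding any delicate constant-chasing is to prove the two inequalities in \eqref{eq:3.14} separately by the same mean-value argument applied on geodesics emanating from a point where both sides agree, invoking only that ${\mathcal E}_{g_{V_{KS}}}^{(k)}-{\mathcal E}_{g_{V_k}}^{(k)}$ is affine along geodesics (its second derivative vanishes) while $J^{(k)}$ is convex with slope at infinity bounded below; the affine-versus-convex comparison then forces the claimed $\epsilon_k J^{(k)}+\epsilon_k'$ bound with $\epsilon_k$ the slope ratio, which tends to $0$ because the slope of the affine function is controlled by $\eta_k$.
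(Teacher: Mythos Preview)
Your overall plan---bound the linear part by $\eta_k:=\|g_{V_{KS}}-g_{V_k}\|_{C^0(P)}$ times an $\ell^1$-type quantity and then compare the latter to $J^{(k)}$---is natural, but the comparison step has a genuine gap. You claim
\[
\frac{1}{kN_k}\sum_i |\mu_i^{(k)}|\ \le\ J^{(k)}(H)+C_k'
\]
with $C_k'$ uniformly bounded in $k$, and justify it by saying the discrepancy ``comes from the ${\mathcal L}_{\mu_0}\circ FS_k$ term, which by the uniform Bergman asymptotics contributes only an $O(k^{-1}\log k)$ error''. This is not a valid argument: under the normalization ${\mathcal E}^{(k)}(H)=0$ the functional $J^{(k)}(H)$ \emph{equals} ${\mathcal L}_{\mu_0}(FS_k(H))$, so there is no ``discrepancy term'' to bound separately; and the Bergman asymptotic $FS_k(H_0)\approx\phi_0$ concerns only the base point, not arbitrary $H$. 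More concretely, $J^{(k)}$ is convex with zero slope at $H_0$ and asymptotic slope roughly $\mu_{\max}/k$ in direction $\mu$, so for moderate $|\mu|$ one has $J^{(k)}(H)\ll \frac{1}{kN_k}\sum_i|\mu_i|$; establishing that the shortfall is $o(1)$ uniformly over ${\mathcal H}_k^{T_c}$ as $k\to\infty$ requires an argument you have not given. Your ``affine-versus-convex'' alternative at the end suffers from the same defect: it yields the inequality with \emph{some} constant $\epsilon_k'$, but gives no mechanism for $\epsilon_k'\to 0$, which is needed later in the proof of Theorem \ref{theo:1.2} (the convergence part, not just existence).

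The paper avoids this difficulty by a different packaging. With $\epsilon_k:=\sup_P|g_{V_{KS}}-g_{V_k}|+2^{-k}$ the weight $\tilde g:=\epsilon_k+g_{V_{KS}}-g_{V_k}$ is \emph{strictly positive} on $P$, so one may form the single $g$-energy ${\mathcal E}_{\tilde g}^{(k)}=\epsilon_k{\mathcal E}^{(k)}+{\mathcal E}_{g_{V_{KS}}}^{(k)}-{\mathcal E}_{g_{V_k}}^{(k)}$. Under the normalization ${\mathcal L}_{\mu_0}(FS_k(H))=0$, the right-hand inequality in \eqref{eq:3.14} becomes $-{\mathcal E}_{\tilde g}^{(k)}(H)\ge -\epsilon_k'$. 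Taking as reference $\tilde H_0:=Hilb_{k,\mu_0,\tilde g}(\phi_0)$ and a geodesic $H_t$ from $\tilde H_0$ to $H$, one has the integral formula ${\mathcal E}_{\tilde g}^{(k)}(H)=\int_X v(H)\,\beta_{k,\mu_0,\tilde g}(\phi_0)+o(1)$ with $v(H)=\partial_t|_{t=0}FS_k(H_t)$. Convexity of $t\mapsto FS_k(H_t)$ plus the normalization (giving $\sup_X(FS_k(H)-\phi_0)\le O(1)$) yields $v(H)\le O(1)$ uniformly in $H$; since $\tilde g\to 0$ uniformly, the positive measure $\beta_{k,\mu_0,\tilde g}(\phi_0)$ has total mass $\to 0$. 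Hence ${\mathcal E}_{\tilde g}^{(k)}(H)\le\epsilon_k'$ with $\epsilon_k'\to 0$. The key point you are missing is precisely this positivity trick: working with the signed difference $g_{V_{KS}}-g_{V_k}$ forces you into an $\ell^1$-versus-$J^{(k)}$ comparison, whereas adding $\epsilon_k$ restores positivity and lets the full $g$-Bergman machinery (Proposition~\ref{prop:3.2}) do the work.
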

\begin{proof}
We set $\epsilon_k:= \sup_P |g_{V_{KS}} - g_{V_k}|+2^{-k}$ and define the functional ${\mathcal E}_{\epsilon_k + g_{V_{KS}} - g_{V_k}}^{(k)}:= \epsilon_k {\mathcal E}^{(k)} + {\mathcal E}_{g_{V_{KS}}}^{(k)} - {\mathcal E}_{g_{V_k}}^{(k)}$. Then we have $\epsilon_k \rightarrow 0$ since $V_k \rightarrow V_{KS}$ and $g_{V_k} \rightarrow g_{V_{KS}}$ uniformly on $P$. By scaling invariance of \eqref{eq:3.14}, we may assume that $H$ is normalized by ${\mathcal E}_{\epsilon_k + g_{V_{KS}} - g_{V_k}}^{(k)} (H)=0$. Now we consider a (non-trivial) geodesic $H_t$ starting at $H_0$ with eigenvalues $(\mu_i^{(k)})$. Then our normalization condition implies that $\mu_{max}:=\underset{i}{\max}(\mu_i^{(k)})$ is positive. Thus, computing in the similar way as in \cite[Proposition 3]{Don09}, we have
\[
(\epsilon_k J^{(k)}-{\mathcal D}_{g_{V_{KS}}}^{(k)}+{\mathcal D}_{g_{V_k}}^{(k)})(H_t) = \epsilon_k {\mathcal L}_{\mu_0} (FS_k(H_t)) \geq \epsilon_k \mu_{max} t + \text(const) \rightarrow \infty
\]
as $t \rightarrow \infty$. Hence the functional $\epsilon_k J^{(k)}-{\mathcal D}_{g_{V_{KS}}}^{(k)}+{\mathcal D}_{g_{V_k}}^{(k)}$ is coercive.
To get the second assertion $\epsilon_k' \rightarrow 0$, we use the $g$-analogue of calculation techniques developed in \cite[Section 7]{BBGZ13}. 
In what follows all $O(1)$ and $o(1)$ are meant to hold uniformly with respect to $H \in {\mathcal H}_k^{T_c}$ as $k \rightarrow \infty$.
We use the normalization
\[
{\mathcal L}_{\mu_0} (FS_k( H)) = 0
\]
so that
\begin{equation}
\sup_X (FS_k (H) - \phi_0) \leq O(1), \label{eq:3.15}
\end{equation}
and a reference point $\tilde{H}_0:= Hilb_{k, \mu_0, \epsilon_k + g_{V_{KS}} - g_{V_k}} (\phi_0)$. Let $H_t$ be a geodesic joining $\tilde{H}_0$ to $H:=H_1 \in {\mathcal H}_k^{T_c}$ and put
\[
v(H):= \left. \frac{\partial}{\partial t} \right|_{t=0} FS_k (H_t).
\]
Then we have the formula
\begin{equation}
{\mathcal E}_{\epsilon_k + g_{V_{KS}} - g_{V_k}}^{(k)} (H)= \int_X v(H) \beta_{k, \mu_0, \epsilon_k + g_{V_{KS}} - g_{V_k}} (\phi_0) + o(1), \label{eq:3.16}
\end{equation}
where the error term $o(1)$ in the RHS comes from the change of base points from $H_0$ to $\tilde{H}_0$. Then $v(H)$ is estimated as
\begin{eqnarray*}
v(H) &\leq& FS_k (H) - FS_k (\tilde{H}_0) \;\;\; (\text{by the convexity of $FS_k (H_t)$}) \\
&=& FS_k (H) - FS_k (Hilb_{k, \mu_0, \epsilon_k + g_{V_{KS}} - g_{V_k}}(\phi_0)) \\
&=& FS_k (H) - \phi_0 - \frac{1}{k} \log \left( \frac{1}{N_k} \rho_{k, \mu_0, \epsilon_k + g_{V_{KS}} - g_{V_k}} (\phi_0) \right) \\
&\leq& FS_k (H) - \phi_0 -\frac{1}{k} \log \left( \frac{1}{N_k} 2^{-k} \rho_{k, \mu_0} (\phi_0) \right) \\
&\hbox{}& (\text{by the definition of $\epsilon_k$}) \\
&\leq& O(1) \;\;\; (\text{by \eqref{eq:3.15} and Proposition \ref{prop:3.2}}).
\end{eqnarray*}
On the other hand, by the uniform covergence $g_{V_k} \rightarrow g_{V_{KS}}$ and Proposition \ref{prop:3.2}, the positive measure $\beta_{k, \mu_0, \epsilon_k + g_{V_{KS}} - g_{V_k}} (\phi_0)$ goes to $0$ uniformly as $k \rightarrow \infty$. Hence we obtain
\[
{\mathcal E}_{\epsilon_k + g_{V_{KS}} - g_{V_k}}^{(k)} (H) \leq \sup_X v(H) \int_X \beta_{k, \mu_0, \epsilon_k + g_{V_{KS}} - g_{V_k}} (\phi_0) + o(1) \leq \epsilon_k'
\]
for some positive number $\epsilon_k' \rightarrow 0$.
Therefore
\[
(\epsilon_k J^{(k)}-{\mathcal D}_{g_{V_{KS}}}^{(k)}+{\mathcal D}_{g_{V_k}}^{(k)}) (H) = - {\mathcal E}_{\epsilon_k + g_{V_{KS}} - g_{V_k}}^{(k)} (H) \geq - \epsilon_k'.
\]
One can prove another inequality in the similar way. 
\end{proof}
By Lemma \ref{lemm:3.5}, we have
\begin{eqnarray*}
{\mathcal D}_{g_{V_k}}^{(k)}(H) &=& {\mathcal D}_{g_{V_k}}^{(k)}(F^*H) \;\;\;(\text{Because ${\rm Fut}_{V_k, k} \equiv 0$}) \\
&\geq& {\mathcal D}_{g_{V_{KS}}}^{(k)} (F^*H) -\epsilon_k J^{(k)}  (F^*H)- \epsilon_k' \\
&\geq& \left( \frac{\delta''}{2} - \epsilon_k \right) J^{(k)} (F^*H) -2C-\epsilon_k'.
\end{eqnarray*}
Thus we have
\begin{equation}
{\mathcal D}_{g_{V_k}}^{(k)}(H) \geq \frac{\delta''}{3} \underset{F \in {\rm Aut}_0(X,V_{KS})}{\rm inf} J^{(k)} (F^*H) -3C \label{eq:3.17}
\end{equation}
for sufficiently large $k$.
Since $J^{(k)}$ is an exhaustion function on ${\mathcal H}_k^{T_c}/ {\mathbb R}$, we find that there exists a unique quantized K\"ahler-Ricci soliton $H_k$ at level $k$ up to the action of ${\rm Aut}_0(X,V_{KS})$ if $k$ is sufficiently large. We normalize $H_k$ so that the corresponding metric $\phi_k:=FS_k (H_k)$ minimizes $J$ on the corresponding ${\rm Aut}_0(X,V_{KS})$-orbit.
Then the minimizing property of $H_k$ implies ${\mathcal D}_{g_{V_k}}^{(k)} (H_k) \leq {\mathcal D}_{g_{V_k}}^{(k)} (Hilb_{k, \mu_0} (\phi))$ for all $\phi \in {\mathcal H}(X,-K_X)^{T_c}$. Thus letting $k \rightarrow \infty$, we obtain
\begin{equation}
{\mathcal D}_{g_{V_k}}^{(k)} (H_k) \leq {\mathcal D}_{g_{V_{KS}}} (\phi) + \gamma_k \label{eq:3.18}
\end{equation}
for all $\phi \in {\mathcal H}(X,-K_X)^{T_c}$,
where $\gamma_k= \gamma_k(\phi) \rightarrow 0$ is a sequence of constants depending on $\phi$.
On the other hand, we have
\begin{eqnarray*}
{\mathcal D}_{g_{V_{KS}}} (\phi_k) &\leq& {\mathcal D}_{g_{V_{KS}}}^{(k)}(H_k) +   \delta_k J^{(k)} (H_k) + \delta_k \;\;\; (\text{Lemma \ref{lemm:3.4} and $g_{V_{KS}}$ is bounded}) \\
&\leq& {\mathcal D}_{g_{V_k}}^{(k)}(H_k) +   \delta_k' J^{(k)} (H_k) + \delta_k' \;\;\; (\text{by Lemma \ref{lemm:3.5}}),
\end{eqnarray*}
where $J^{(k)} (H_k)$ is bounded from above by \eqref{eq:3.17} and \eqref{eq:3.18}. Thus we have
\[
\liminf_{k \rightarrow \infty} {\mathcal D}_{g_{V_{KS}}} (\phi_k) \leq {\mathcal D}_{g_{V_{KS}}} (\phi)
\]
for all $\phi \in {\mathcal H}(X,-K_X)^{T_c}$. Since the set ${\mathcal H}(X,-K_X)^{T_c}$ contains a K\"ahler-Ricci soliton with respect to $V_{KS}$, i.e., a minimizer of  ${\mathcal D}_{g_{V_{KS}}}$ on ${\mathcal E}^1 (X, -K_X)^{T_{KS}}$, we have
\[
\liminf_{k \rightarrow \infty} {\mathcal D}_{g_{V_{KS}}} (\phi_k) \leq \underset{\phi \in {\mathcal E}^1 (X, -K_X)^{T_{KS}}}{\inf} {\mathcal D}_{g_{V_{KS}}} (\phi).
\]
This yields that $\{\phi_k \}$ is a minimizing sequence of the functional ${\mathcal D}_{g_{V_{KS}}}$. Since $J^{(k)}(H_k)$ is bounded, $J(\phi_k)$ is also bounded by Lemma \ref{lemm:3.4}. Thus $\{ \phi_k \}$ is contained in a compact sublevel set of $J$, and there exists a subsequence which converges to some metric $\phi_{\infty} \in {\mathcal E}^1 (X, -K_X)^{T_{KS}}$. Since ${\mathcal D}_{g_{V_{KS}}}$ is lower semi-continuous  (cf: \cite[Lemma 6.4]{BBGZ13}, \cite[Proposition 2.15]{BN14}), we obtain
\[
{\mathcal D}_{g_{V_{KS}}}(\phi_{\infty}) \leq \liminf_{k \rightarrow \infty} {\mathcal D}_{g_{V_{KS}}} (\phi_k) \leq \underset{\phi \in {\mathcal E}^1 (X, -K_X)^{T_{KS}}}{\inf} {\mathcal D}_{g_{V_{KS}}} (\phi),
\]
hence $\phi_{\infty}$ is a K\"ahler-Ricci soliton with respect to $V_{KS}$, which is smooth by the regularity theorem \cite[Theorem 1.3]{BN14}. The metric $\phi_{\infty}$ may depend on a choice of a convergent subsequence. However, by our normalization of $\phi_k$, we know that $\phi_{\infty}$ minimizes $J$-functional on the space of K\"ahler-Ricci solitons with respect to $V_{KS}$, which can be identified with the space ${\rm Aut}_0 (X, V_{KS}) \phi_{\infty}/K$, where $K$ is the stabilizer of $\phi_{\infty}$ (cf. \cite[Theorem 3.6]{BN14}). Since $J$ is strictly convex on ${\rm Aut}_0 (X, V_{KS}) \phi_{\infty}/K$ with respect to the natural Riemannian structure (where geodesics are one parameter subgroups), such a minimizer is uniquely determined. Therefore, the metric $\phi_{\infty}$ is, in fact, independent of a choice of a subsequence, which yields that $\phi_k$ converges to a K\"ahler-Ricci soliton $\phi_{\infty}$ weakly. This completes the proof. 
\end{proof}

\end{document}